\documentclass[11pt,letterpaper]{amsart}

\usepackage{graphicx}
\usepackage{amssymb}
\usepackage[french, english]{babel}
\usepackage[utf8]{inputenc}

\usepackage{amsmath,amsfonts,amssymb,amsthm,amsbsy,mathrsfs}
\usepackage[T1]{fontenc}

\usepackage{mathrsfs}
\usepackage{dsfont}
\usepackage{tikz}
\usepackage{bbm}
\usepackage{appendix}
\usepackage{pifont}
\usepackage[normalem]{ulem}
\usetikzlibrary{patterns}
\usepackage[colorlinks]{hyperref}
\usepackage{cleveref}
\definecolor{darkgreen}{rgb}{0.0, 0.5, 0.0}

\newcommand{\eps}{\varepsilon}

\newcommand{\vol}{\text{vol}}

\newcommand{\bT}{\mathbf{T}_{2n,g_n}}

\renewcommand{\P}{\mathbb P}

\newcommand{\somme}[3]{\sum_{#1}^{#2} #3}

\newcommand{\indi}[1]{\mathds{1}_{#1}}

\newcommand{\tk}{\frac{1}{4D}}

\theoremstyle{definition}

\newtheorem{thm}{Theorem}

\newtheorem{defn}{Definition}
\newtheorem{rem}[defn]{Remark}

\newtheorem{prop}[defn]{Proposition}

\newtheorem{lem}[defn]{Lemma}

\usepackage{todonotes}
\usepackage{comment}

\crefname{thm}{Theorem}{Theorems}
\crefname{lem}{Lemma}{Lemmas}
\crefname{prop}{Proposition}{Propositions}
\crefname{defn}{Definition}{Definitions}

\def\isol{$\kappa$-isolated}
\def\strongisol{strongly $\kappa$-isolated}
\def\keps{\ensuremath{\kappa_\eps}}
\def\kepsvalue{\ensuremath{(1-\eps)\kappa}}
\def\isolbis{$\keps$-isolated}
\def\badset{$\kappa$-bad set}
\def\strongbadset{strong $\kappa$-bad set}
\def\badsetbis{$\keps$-bad set}

\def\kexp{$\kappa$-expander}
\def\kexpbis{$\keps$-expander}
\def\J{\mathcal J}

\title[High genus expanders]{\bf{Large expander subgraphs in high genus triangulations}}

\author{Tanguy {Lions}}
\thanks{ENS Lyon, \url{tanguy.lions@ens-lyon.fr}}

\author{Baptiste {Louf}}
\thanks{CNRS and Université de Bordeaux, \url{baptiste.louf@math.u-bordeaux.fr}}

\begin{document}

\maketitle

\begin{abstract}
We prove that random triangulations of high genus contain very large expander subgraphs, answering a question of Benjamini. Our approach relies on new general criteria for arbitrary graphs to contain large expander subgraphs.
\end{abstract}

\section{Introduction and main results}
In this paper, we dive further into the large scale geometry of \emph{high genus maps}. 

\emph{Combinatorial maps} are a model of discrete surfaces constructed by gluing polygons along their edges. They have been well studied for several decades now, especially in the planar case (maps of the sphere): \emph{enumeration} results go back to Tutte in the 60's \cite{Tut62,Tut62b,Tut63}, then a \emph{bijective approach} has been carried out~\cite{CV81,Sch98these,BDG04}. With all these tools in hand, the study of \emph{geometric properties of large random planar maps} soon followed~\cite{AS03,CS04,LG11, Mie11}. Similar results hold for fixed positive genus~\cite{Chapuy:profile, BM22}. 

It is then natural to ask the same kind of questions as \emph{both the genus and the size go to infinity}, which turns out to be challenging due to the difficulty to obtain enumerative results, as we are now in the realm of \emph{multivariate asymptotics}, which is a challenging and wide open topic (see e.g.~\cite{acsv_web}). Thanks to a powerful bijection~\cite{CFF13}, a specific family, \emph{unicellular maps}, was investigated quite systematically in high genus \cite{ACCR13,Ray13a, Louf-expander, JansonLouf1, JansonLouf2}. 

Finally, recently, several results on more general models of high genus maps (mostly triangulations) were obtained: first their local behaviour~\cite{BL19,BL20}, then some global observables~\cite{Louf,BudLoufChapuy,Louf-systole}. All of these works shed light on the \emph{hyperbolic} nature of high genus maps, notably by analogy with another model of high genus surfaces: \emph{Weil--Petersson hyperbolic surfaces} (see e.g.~\cite{Mir13,JansonLouf2}).

In particular, and this will be the main focus of this work, hyperbolic behaviour implies good \emph{expansion properties} (for instance, hyperbolic surfaces are expanders~\cite{Mir13}). However, maps cannot be perfect expanders due to their fractal nature. Indeed, by~\cite{BL19}, every constant size pattern (including those that have very bad expansion) will appear somewhere in the map, hence the Cheeger constant has to be $o(1)$, and it is actually even of order $\frac{1}{\log n}$ as shown in~\cite{BudLoufChapuy}. However, Benjamini conjectured that high genus maps must contain a \emph{large expander subgraph}. This was first shown for unicellular maps by the second author in~\cite{Louf-expander}, and in this paper we show that it is also true for high genus triangulations.

\begin{thm}\label{thm_triangulations}
    Fix a sequence $(g_n)_{n\geq0}$ such that $\frac{g_n}{n}\to \theta \in(0,1/2)$. Let $\bT$ be a uniform triangulation of genus $g_n$ on $3n$ edges, and $\bT^*$ its dual map. Then for every $\eps>0$, there exists $\kappa>0$ depending only on $\theta$ and $\eps$ such that with probability $\geq 1-\eps$, both $\bT$ and $\bT^*$ contain a submap on at least $(1-\eps)3n$ edges that is a \kexp{}.
\end{thm}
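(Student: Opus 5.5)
The plan is to reduce the theorem to a deterministic criterion plus a probabilistic input on small cuts. The criterion, which the abstract announces, reads as follows. Suppose a graph $G$ with $m$ edges and bounded average degree has no \emph{\badset{}}, meaning no set of vertices $S$ with $\varepsilon' m \le |E(S)| \le m/2$ and edge boundary $|\partial S| \le \kappa |E(S)|$. Then $G$ contains a subgraph with at least $(1-\eps)m$ edges that is a $\kappa'$-expander, where $\kappa'$ depends only on $\kappa$ and $\eps$.

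I would prove the criterion by iterative peeling. Start with $H=G$. While $H$ is not a \kexpbis{}, take a witness set $S$ in $H$ with small boundary relative to its volume, remove it, and add it to an accumulated removed set $R$. Because boundaries are subadditive, $R$ has boundary in $G$ at most $\kappa_\eps |E(R)|$ plus error terms. If $|E(R)|$ ever reached $\eps m$, then $R$ (or its complement, whichever has at most half the edges) would be a \badset{}, contradicting the hypothesis. So the process stops while fewer than $\eps m$ edges have been removed, and what remains is an expander. The technical point is to take $\kappa_\eps=(1-\eps)\kappa$, so that the slack absorbs the accumulation. I would also handle isolated pieces, which is presumably the role of the \isol{} definitions.

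It then suffices to show that with probability at least $1-\eps$, neither $\bT$ nor $\bT^*$ has a \badset{} for some small $\kappa$. For the dual, $\bT^*$ is $3$-regular, and a set of faces with small boundary corresponds to a submap with few boundary edges, so both cases can be treated together. I would use a first-moment bound. Cutting $\bT$ along the boundary of $S$ gives a triangulation with $k=|\partial S|\le \kappa|E(S)|$ boundary edges on each side, of genera $g_1+g_2\le g_n$ (up to corrections from the number of boundary components). The expected number of bad sets is then at most
\[
\sum \frac{\tau(n_1,g_1;k)\,\tau(n_2,g_2;k)\cdot(\text{gluing choices})}{\tau(n,g_n)},
\]
summed over $n_1\ge \eps' n$, all genera and all boundary structures. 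Known high-genus enumeration estimates, namely Budzinski--Louf bounds of the form $\tau(n,g)\approx n^{2g}e^{f(g/n)n}$ together with their boundary versions, should make each term exponentially small in $n$ once $\kappa$ is small. The intuition is that splitting a high-genus surface into two macroscopic pieces costs an entropy factor of order $e^{-cn}$, while the gluing choices along $k\le \kappa n$ boundary edges contribute only $e^{O(\kappa\log(1/\kappa)n)}$. Both sides of the cut have $\Theta(n)$ edges, and genus is allocated between them.

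The main obstacle is this enumerative estimate. It requires uniform upper bounds on triangulations with boundaries, possibly several boundary components, in high genus, and a proof that the function $f$ is strictly concave or superadditive enough to make splitting costly across all genus splits, including those where one side is nearly planar. I would first try to reduce to the one-boundary case by gluing disks onto the extra boundaries, which changes counts by only $e^{O(k)}$. I would then use the monotonicity and concavity of the asymptotic entropy from~\cite{BL19,BudLoufChapuy}.
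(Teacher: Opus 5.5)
Your deterministic reduction is sound. The peeling argument is essentially the one behind \cref{thm_isolated_to_expander}, with the hypothesis ``no macroscopic, possibly disconnected, set of boundary ratio below $\kappa$'' in place of ``few $\kappa$-isolated vertices''. One step needs more care: when the last removed piece $S_t$ pushes the removed set past half the volume, you need $\vol_{G_{t-1}}(S_t)\le\vol_{G_{t-1}}(G_t)$ to see that the complement $G_t$ is the bad set. The gap is the probabilistic input. This is where everything specific to random triangulations lives, and you leave it as a hope. The paper does not obtain it by enumeration. It imports \cite[Proposition 12]{BudLoufChapuy}: for some $\kappa_0>0$, with probability at least $1-\eps$, at most $\eps n$ faces of $\mathbf{T}_{2n,g_n}$ are strongly $\kappa_0$-isolated. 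It then applies \cref{strong_isolated_to_isolated} and \cref{thm_isolated_to_expander} to the $3$-regular dual. It obtains the statement for $\mathbf{T}_{2n,g_n}$ itself deterministically through \cref{thm_dual}, so it never needs a probabilistic statement about vertex sets of the primal, whose degrees are unbounded.

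Your first-moment sketch cannot replace that proposition as set up. The removed set in your criterion is typically disconnected, so you must rule out macroscopic sets made of many small pieces. Your cut-and-count only treats a splitting into two connected surfaces, and it is driven by the heuristic that a macroscopic bottleneck costs $e^{-cn}$. But by the local convergence of \cite{BL19}, for every fixed $\kappa>0$ a positive proportion of faces lie (in expectation) in microscopic $\kappa$-bad sets. An example is a planar disk of more than $1/\kappa$ triangles enclosed by a cycle of length at most $3$. Any union of disjoint such sets has boundary ratio below $\kappa$. Whether these unions reach macroscopic volume depends on how fast that proportion tends to $0$ as $\kappa\to0$. That is a local-limit statement, exactly what the cited proposition supplies, and no splitting-cost estimate detects it. Handling it by a first moment would require counting high-genus triangulations with $\Theta(n)$ marked holes and beating the entropy of choosing which pieces to include. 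That is far beyond the one-boundary estimates you mention. Even for genuine bottlenecks, the uniform multi-boundary bounds and the superadditivity of $f$ that you need are only asserted, not established. As a sanity check: if your claimed exponentially small first moment held, your criterion would give the theorem with probability $1-o(1)$, which the paper explicitly leaves open as a conjecture.
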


\begin{rem}
    We conjecture that this theorem still holds if we replace "with probability $\geq 1-\eps$" by "with probability $1-o(1)$".
\end{rem}

In order to prove this theorem, we will start from a result of~\cite{BudLoufChapuy} that bounds the number of "\emph{isolated faces}" in high genus triangulations. Roughly speaking, in any graph, the \isol{} vertices are those that prevent it from being a \kexp{} (and isolated faces are defined by duality). We refer to~\cref{sec_def} for precise definitions.

The rest of the proof will rely on two new general theorems that establish criteria to find large expander subgraphs in arbitrary graphs. This question has been tackled in several recent articles~\cite{bottcher2010bandwidth,  Kri18, krivelevich2019expanders, LS20,chakraborti2022well,LMS}.

First, we show that having few isolated vertices  means having a large expander subgraph. We phrase this theorem in terms of volume, i.e. the sum of the degrees.
\begin{thm}\label{thm_isolated_to_expander}
    Let $G$ be a graph on $n$ edges such that the total volume of its \isol{} vertices is less than $\eps n$, with $0<\eps<1/2$. Then $G$ contains an induced subgraph on at least $(1-\eps)n$ edges that is a \kexpbis{}, where $\keps=\kepsvalue$.
\end{thm}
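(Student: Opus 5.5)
The plan is to build the expander by greedily deleting bad sets, and then to show that everything deleted is covered by the \isol{} vertices of the original graph. Throughout I use the conventions of \cref{sec_def} as I understand them. Volumes are sums of degrees, so $\vol(G)=2n$. A \badset{} is a set $S$ with $\vol(S)\le \vol(G)/2$ and $|\partial S|<\kappa\,\vol(S)$. A vertex is \isol{} if it lies in some \badset{}. A \kexp{} is a graph with no \badset{}. Let $U$ denote the set of \isol{} vertices of $G$, so that $\vol_G(U)<\eps n$ by hypothesis.

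\textbf{Construction.} Set $H_0=G$ and $Z_0=\emptyset$. As long as $H_i$ has a \badsetbis{}, i.e.\ a set $S_i$ with $\vol_{H_i}(S_i)\le \vol(H_i)/2$ and $|\partial_{H_i}S_i|<\keps\,\vol_{H_i}(S_i)$, delete it: put $H_{i+1}=H_i\setminus S_i$ (induced) and $Z_{i+1}=Z_i\cup S_i$. The process stops at some induced subgraph $H=G[V\setminus Z]$, which is a \kexpbis{} by construction. Since every edge of $G$ not in $H$ is incident to $Z$, $H$ has at least $n-\vol_G(Z)$ edges. It therefore suffices to prove $\vol_G(Z)\le \eps n$.

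\textbf{Key boundary estimate.} Every edge of $\partial_G Z$ leaves $Z$ through some $S_i$ and lands in $V\setminus Z\subseteq V(H_i)\setminus S_i$. Hence it belongs to $\partial_{H_i}S_i$, and
\[
|\partial_G Z|\le \sum_i |\partial_{H_i}S_i| < \keps \sum_i \vol_{H_i}(S_i)\le \keps\,\vol_G(Z)\le \kappa\,\vol_G(Z).
\]
The same estimate holds for every intermediate $Z_j$. So as soon as $\vol_G(Z_j)\le n=\vol(G)/2$, the set $Z_j$ is a \badset{} of $G$. Then $Z_j\subseteq U$, which gives $\vol_G(Z_j)<\eps n$. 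I would run an induction on $j$, with hypothesis $\vol_G(Z_j)<\eps n$, to conclude at the end of the process.

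\textbf{Main obstacle: the volume condition in the induction step.} Knowing $\vol_G(Z_j)<\eps n$, we must rule out $\vol_G(Z_{j+1})>n$. We have $\vol_G(S_j)\le \vol_{H_j}(S_j)+\vol_G(Z_j)\le \vol(H_j)/2+\eps n$, which is not quite $\le n-\vol_G(Z_j)$. The hypothesis $\eps<1/2$ and the slack between $\keps$ and $\kappa$ are designed to absorb this.

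If $\vol_G(Z_{j+1})>n$, I would pass to the complement $W=V\setminus Z_{j+1}$. It has the same edge boundary, $|\partial_G W|=|\partial_G Z_{j+1}|<\keps\vol_G(Z_{j+1})$, and volume $\vol_G(W)<n$. Since $\vol_{H_j}(S_j)\le\vol(H_j)/2$, the remainder $V(H_j)\setminus S_j=W$ carries at least half of $\vol(H_j)\ge 2n-2\eps n$. This gives $\vol_G(W)\ge(1-\eps)n$, and so $\vol_G(Z_{j+1})\le 2n-(1-\eps)n=(1+\eps)n$. Therefore
\[
|\partial_G W|<(1-\eps)\kappa(1+\eps)n\le \kappa(1-\eps)n\le\kappa\,\vol_G(W)
\]
(using $(1-\eps)(1+\eps)\le 1-\eps$ up to adjusting constants, or choosing $\keps$ accordingly). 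Thus $W$ is a \badset{} of $G$, so $W\subseteq U$ and $\vol_G(W)<\eps n<(1-\eps)n$, a contradiction.

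The computation in this step is where I expect the exact constants ($\keps=\kepsvalue$, $\eps<1/2$) to be forced, and I would adjust the bookkeeping there if the inequality is off by a factor.
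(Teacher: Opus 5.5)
Your plan is the paper's: greedily delete $\keps$-bad sets, show by induction that everything deleted lies in $\kappa$-bad sets of $G$, and exclude the case where the deleted volume exceeds $\vol(G)/2$ by looking at the remaining set $W$ (the paper's $G_t$). Your bound $\vol_G(W)\ge\frac12\vol(H_j)>(1-\eps)n$ is correct.

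\textbf{The gap.} The step that closes the induction fails with the stated constant. Since $(1-\eps)(1+\eps)=1-\eps^2>1-\eps$, the bounds $|\partial_G W|<\keps\vol_G(Z_{j+1})\le\keps(1+\eps)n$ and $\vol_G(W)\ge(1-\eps)n$ do not give $|\partial_G W|<\kappa\vol_G(W)$. Writing $w=\vol_G(W)$, the inequality $\keps(2n-w)\le\kappa w$ would need $w\ge\frac{2(1-\eps)}{2-\eps}n$. The value $\keps=(1-\eps)\kappa$ is part of the statement, so you cannot re-choose it. As written, your argument proves the theorem only with $\frac{1-\eps}{1+\eps}\kappa$ in place of $\keps$.

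The loss comes from charging $\partial_{H_j}S_j$ to the whole of $\vol_G(S_j)$. The missing step is to split $\partial_G W$ according to where the edges start:
\begin{itemize}
\item Edges leaving $Z_j$ are boundary edges of earlier $S_i$ in $H_i$, so there are at most $\keps\vol_G(Z_j)<\keps\eps n$ of them.
\item Edges leaving $S_j$ are exactly $\partial_{H_j}S_j$. The half-volume condition in $H_j$ gives $|\partial_{H_j}S_j|<\keps\vol_{H_j}(S_j)\le\keps\vol_{H_j}(W)\le\keps\vol_G(W)$.
\end{itemize}
Hence
\[
|\partial_G W|<\keps\bigl(\eps n+\vol_G(W)\bigr)\le\keps\Bigl(\frac{\eps}{1-\eps}+1\Bigr)\vol_G(W)=\kappa\vol_G(W).
\]
This is essentially the paper's estimate of $e_G(G_t,G\setminus G_t)$, and it is exactly what the choice $\keps=(1-\eps)\kappa$ is tuned for.

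\textbf{Connectivity.} In the paper, bad sets are connected by definition, and a vertex is $\kappa$-isolated when it lies in a \emph{connected} $\kappa$-bad set. You dropped connectivity, so ``$Z_j$ is a bad set, hence $Z_j\subseteq U$'' is not justified as stated. The repair is the role of $\J$ in the paper. The $S_i$ are connected (bad sets are), so each component $C$ of $G[Z_j]$ is a union of $S_i$'s whose boundary edges all go to $V\setminus Z_j$. Your charging argument then gives $|\partial_G C|<\keps\vol_G(C)$ component by component.

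The same concern applies to $W$, which need not be connected; the paper is equally terse when it calls $G_t$ a bad set. One clean way around it: suppose $\vol_G(Z_{j+1})>n$. Then $\vol_G(S_j)>(1-\eps)n$. Also $\vol_G(S_j)\le\frac12\vol(H_j)+e_G(S_j,Z_j)\le n$, using $\vol(H_j)=2n-\vol_G(Z_j)-|\partial_G Z_j|$. The same splitting gives $|\partial_G S_j|<\keps\bigl(\vol_{H_j}(S_j)+\eps n\bigr)<\kappa\vol_G(S_j)$. So the connected set $S_j$ is itself a $\kappa$-bad set of $G$ of volume greater than $\eps n$, a contradiction.
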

\begin{rem}
    We leave the question of a reciprocal result open. That is, does having an expander subgraph (plus possibly some mild extra assumptions) imply having few isolated vertices ?
\end{rem}

Finally, we show that, provided that the face degrees are bounded, the property of having a large expander subgraph is stable under duality of maps.

\begin{thm}\label{thm_dual}
  Fix $\eps,\kappa,D>0$.  Let $M$ be a map on $n$ edges with face degrees bounded by $D$ such that its dual map $M^*$ contains an induced subgraph on at least $(1-\eps)n$ edges that is a \kexp{}. Then $M$ contains a subgraph on at least $(1-\eps)n$ edges that is a $\frac{\kappa}{8D}$-expander.
\end{thm}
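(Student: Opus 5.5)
I will take $H$ to be the subgraph of $M$ formed by the duals of the edges of the expander $H^*\subset M^*$, with vertex set the endpoints of those edges. Duality is a bijection between edges of $M$ and of $M^*$, so $H$ has at least $(1-\eps)n$ edges. I use the definition of \cref{sec_def}: every vertex set $S$ with $\vol(S)\le \vol(G)/2$ satisfies $|\partial S|\ge \kappa \vol(S)$, with volumes taken in the subgraph. Let $F$ be the set of faces of $M$ that are vertices of $H^*$. The fact that $H^*$ is induced gives the key feature: an edge of $M$ lies in $H$ if and only if both faces on its two sides belong to $F$. The proof is a transfer of boundary edges between $H$ and $H^*$, paying factors of $D$ coming from face degrees.

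Fix $S\subset V(H)$ with $\vol_H(S)\le \vol(H)/2$, and let $E(S)$ be the set of edges of $H$ with an endpoint in $S$. Define $\Phi(S)\subset F$ as the set of faces incident to some edge of $E(S)$; these faces lie in $F$ by the key feature.

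\emph{Volume comparison.} Each edge of $E(S)$ touches at most two faces, and each face has degree at most $D$. Hence
\[
\vol_{H}(S)/2 \le |E(S)|,\qquad \vol_{H^*}(\Phi(S))\le D\,|\Phi(S)|\le 2D\,|E(S)|\le 2D\vol_H(S).
\]
Conversely, every edge of $E(S)$ is an edge of $H^*$ between two faces of $\Phi(S)$, so $\vol_{H^*}(\Phi(S))\ge |E(S)|\ge \vol_H(S)/2$.

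\emph{Boundary transfer.} Take an edge $e^*\in\partial_{H^*}\Phi(S)$, joining $f\in\Phi(S)$ to $f'\in F\setminus\Phi(S)$. Its primal edge $e$ is in $H$ but not in $E(S)$, so neither endpoint of $e$ is in $S$. On the other hand, $f$ contains some edge of $E(S)$, hence a vertex of $S$. Walk around the boundary of $f$ from a vertex of $S$ to $e$; along the way we must leave $S$. I then want to argue that the first edge where we leave $S$ is an edge of $\partial_H S$, and charge $e^*$ to it. Each edge of $\partial_H S$ lies on at most two faces of degree at most $D$, so it receives at most $2D$ charges, giving $|\partial_{H^*}\Phi(S)|\le 2D|\partial_H S|$.

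\emph{Complement case.} If $\vol_{H^*}(\Phi(S))\le \vol(H^*)/2$, expansion of $H^*$ gives
\[
|\partial_H S|\ge \tfrac{\kappa}{2D}\vol_{H^*}(\Phi(S))\ge \tfrac{\kappa}{4D}\vol_H(S).
\]
Otherwise I apply the same construction to $T=V(H)\setminus S$ and use the complement of $\Phi(S)$ in $F$. Since $\vol_H(T)\ge \vol(H)/2$, the complement of $\Phi(S)$ has $H^*$-volume comparable to $\vol_H(S)$ up to a factor $2$ or $D$, which gives the stated constant $\kappa/(8D)$ after bookkeeping.

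The main obstacle is the boundary transfer. The walk around $f$ may use edges of $M$ that are not in $H$, because the face on their other side is not in $F$. The leaving edge could then be an edge of $M$ outside $H$, and so not in $\partial_H S$. Also, $H$ is not induced in $M$, so there can be edges of $M$ between $S$ and $V(H)\setminus S$ that do not count.

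My first fix is to walk around $f$ in both directions and to charge every edge of $f$ not in $H$ as well. Each such edge is adjacent in $M^*$ to a face outside $F$; the number of these is controlled because $H^*$ covers $(1-\eps)n$ edges, but this is a global bound, not one relative to $S$, so it is insufficient. The better fix is to modify $S$ locally. Replace $S$ by $S'$, obtained by adding to $S$ all vertices of faces of $\Phi(S)$ whose boundary arc from $S$ avoids $H$-edges. Bounded face degree means this changes volumes by at most a factor $D$, so the inequality for $S'$ transfers back to $S$. I expect getting this bookkeeping exactly right, with the constant $8D$, to be the delicate part.
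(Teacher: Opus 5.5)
\textbf{There is a genuine gap at the step you flag, and it cannot be closed by bookkeeping.} For your subgraph $H$ (the primal of the edge set of $H^*$), the intermediate claim that $H$ is an expander is false, so neither of your fixes can work.

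Inducedness of $H^*$ only tells you that an edge whose two sides both lie in $F$ belongs to $H$. A face $f\in F$ crossed by a path of $H^*$ may have its entry and exit edges separated, on both arcs of its boundary, by edges whose other side is a removed face. The paper uses the same $H$. In the proof of \cref{lem_ineq_outgoing} it disposes of this point by asserting that every edge of a face of $G^*$ lies in $G$ ``because $G^*$ is induced''. That assertion is exactly what fails, and the proof of \cref{lem_vol_not_too_big} reuses it.

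Here is a concrete counterexample. Take two maps $A,B$ with face degrees at most $D$, each with one boundary cycle of length $2k$ and with bounded-degree expanding duals. Glue them along an annulus of $2k$ quadrangles, alternately put in $F$ (``bridges'') and left out (``holes'').
\begin{itemize}
\item The top and bottom edges of each bridge are $H$-edges, but every rung of the annulus borders a hole. So $H$ has no edge between $V(A)$ and $V(B)$: it is disconnected into two parts of linear volume.
\item Meanwhile $H^*=M^*[F]$ misses only the $4k$ edges of the holes. Its $A$- and $B$-parts are joined by $k$ disjoint paths of length two through the bridges. So for $k$ of order $\eps n$ with $4k\le \eps n$, it is a $\kappa$-expander for some $\kappa>0$ depending on $\eps$, $D$ and the expansion of $A^*,B^*$. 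This $\kappa$ is necessarily at most of order $\eps$, which is the regime of the application.
\end{itemize}
Taking $S$ to be the lighter side gives $\partial_H S=\emptyset$. No local enlargement $S\mapsto S'$ can help: the defect is in the choice of subgraph, not of $S$.

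\textbf{The remedy is to enlarge the subgraph.} First discard isolated vertices of $H^*$. Then let $H^+$ consist of all edges of $M$ lying on at least one face of $F$. It contains $H$, so it has at least $(1-\eps)n$ edges. Now the whole boundary walk of every face of $F$ lies in $H^+$, so your walk-around-$f$ argument is valid. It gives $e_{H^*}(S^*,F\setminus S^*)\le 2D\,|\partial_{H^+}S|$, where $S^*$ is the set of faces of $F$ meeting $S$.

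\textbf{Your complement case is also missing an idea.} The crude bound $\vol_{H^*}(\Phi(S))\le 2D\vol_H(S)$ says nothing about the volume of $F\setminus\Phi(S)$, which may well be empty. The paper's device is as follows.
\begin{itemize}
\item First assume $|\partial S|\le \frac{1}{4D}\vol(S)$; otherwise you are done, since $\kappa\le 1$.
\item Split $S^*$ into faces with all vertices in $S$, of total volume at most $\vol(S)$, and mixed faces.
\item Each mixed face carries an edge of $\partial S$, again by the full-boundary property, so there are at most $2|\partial S|$ of them.
\end{itemize}
This gives $\vol(S^*)\le\frac32\vol(S)$, and hence a complement of proportional volume to which expansion of $H^*$ applies.

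With $H^+$ you lose a bit more. You only have $\vol_{H^+}(S)\le 2D\vol_{H^*}(S^*)$ and $\vol(H^+)\le \vol(H^*)/(1-\eps)$. For $\eps<1/2$, after lowering the threshold to $\frac{1-2\eps}{4D}$, this yields an expansion constant of order $\kappa(1-2\eps)/D^2$ rather than $\kappa/(8D)$. That is still enough for \cref{thm_triangulations}.
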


\subsection*{Acknowledgements.} 

We thank Thomas Budzinski and Guillaume Chapuy for (joint) works and inspiring discussions about the geometry of high genus maps. TL thanks Justin Salez for interesting discussions on expanders. BL also thanks Fiona Skerman for their joint works and discussions around large expander subgraphs. Finally, we thank Itai Benjamini for asking the question that led to~\cref{thm_triangulations}.

This work was started during the workshop "Combinatorics and Discrete Probability" organized at CIRM in November 2025.

TL was supported by ANR IsOMa (ANR-21-CE48-0007). BL was partially supported by ANR CartesEtPlus (ANR-23-CE48-0018) and ANR HighGG (ANR-24-CE40-2078-01).

\section{Definitions}\label{sec_def}

\subsection{Graphs}
In the rest of the paper, we denote by $G$ a multigraph, that is a graph with loops and multiple edges allowed. For $X,Y \subset V(G)$, we denote by $E_{G}(X,Y)$ for the set of oriented edges in $G$ with a starting point in $X$ and an endpoint in $Y$. We also write $$e_G(X,Y) = \# E_G(X,Y).$$ Then for $x \in G$, we denote by $\deg_{G}(x) = e(\{x\},G \setminus \{x\})$. 
Finally, for $X  \subset V(G)$ we define the volume of the graph $X$ seen as a subgraph of $G$ as follows:
\begin{align}\label{volume}
    \vol_G(X) = \somme{x \in X}{}{\deg_G(x)}.
\end{align}
Sometimes, we forget the subscript $G$ when the graph $G$ is obvious. Then, we simply write
\begin{center}
    $\vol(X) := \vol_G(X)$.
\end{center}
The \textbf{Cheeger constant} or \textbf{isoperimetric number} is given by :
\begin{align}\label{cheeger_constant_definition}
h(G) = \inf \bigg\{ \frac{e_G(X,G\setminus X )}{\vol_G(X)}  \text{: } X \subset G \text{ and } \vol_G(X) \le \frac{\vol_G(G)}{2}\bigg \}.
\end{align}

\begin{defn}\label{definition_badsets}
    Take $\kappa > 0$ and $X \subset G$ connected\footnote{that is, the subgraph of $G$ induced by $X$ is connected.}. We say that $X$ is a \emph{\badset{}} if $\vol_G(X) \le \frac{\vol_G(G)}{2}$ and $\frac{e_G(X,G\setminus X )}{\vol_G(X)}  < \kappa$ (see Figure~\ref{badset}). We say that a \badset{} $X$ is a \emph{\strongbadset{}} if  $G \setminus X$ is also connected.
\end{defn}

 \begin{figure}[h]
     \centering
     \includegraphics[scale=0.2]{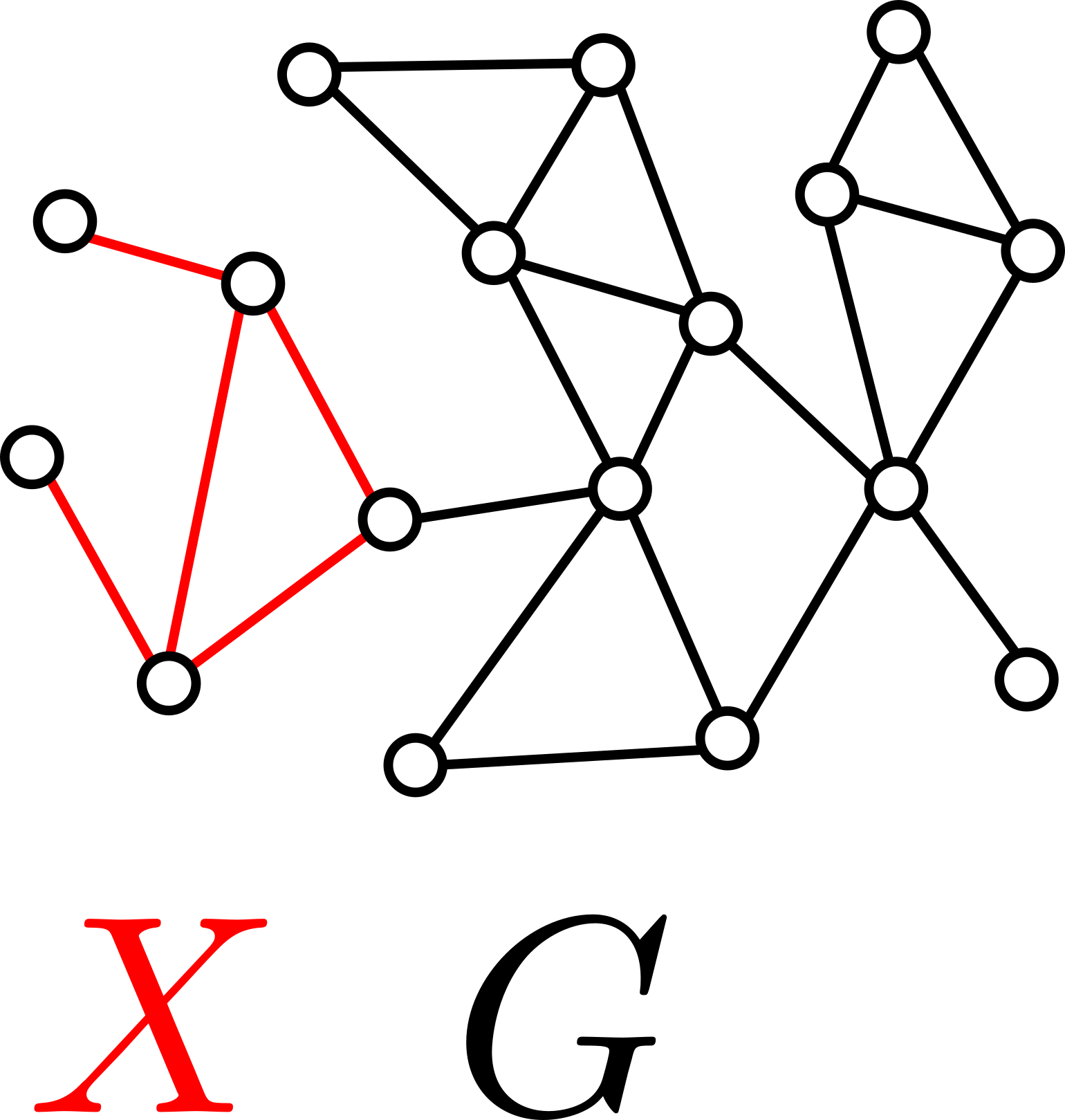}
     \caption{On this example, we represent a graph $G$ and $X \subset G$ connected that is a $\kappa$-bad set with $\kappa = \frac{1}{10}$. It is also a strong $\kappa$-bad set. }
     \label{badset}
 \end{figure}

\begin{defn}\label{definition_expander}
For any $\kappa > 0$, we say that $G$ is a \emph{$\kappa$-expander} if $h(G) \ge \kappa$ or equivalently if $G$ does not contain a \badset{}.
\end{defn}

\begin{defn}\label{isolated_vertex}
Take $\kappa > 0$ and $v \in G$. We say that $v$ is \emph{\isol{}} (resp. \emph{\strongisol{}}) if there exists a \badset{} (resp. \strongbadset) $X$  such that $v \in X$. We denote by $\mathbf{Isol}_{\kappa}(G)$ (resp. $\mathbf{Isol}^{+}_{\kappa}(G)$) the set of $\kappa$-isolated (resp. \strongisol{}) vertices. 
\end{defn}

Note that these two notions are very similar.
In the next lemma, we show that having few \strongisol{} vertices implies having few $\kappa^{2}$-isolated vertices.

\begin{lem}\label{strong_isolated_to_isolated}
Take  $0<\eps<1$ and $\kappa>0$ such that $\kappa <\min\left(\frac{1-2\varepsilon}{3},1-4\eps\right)$. If the connected (multi)graph $G$ satisfies $ \vol_{ {G}}(\mathbf{Isol}^{+}_{\kappa}(G)) \le \varepsilon \vol(G)$, then we have $\mathbf{Isol}_{\kappa^2}(G) \subset \mathbf{Isol}_{\kappa}^{+}(G)$, and in particular $ \vol_{ {G}}(\mathbf{Isol}_{\kappa^2}(G)) \le \varepsilon \vol(G)$.
\end{lem}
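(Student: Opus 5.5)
The plan is to fix $v\in\mathbf{Isol}_{\kappa^2}(G)$ and a $\kappa^2$-bad set $X\ni v$, so that $X$ is connected, $\vol(X)\le \vol(G)/2$ and $e(X,G\setminus X)<\kappa^2\vol(X)$. From $X$ I will build a strong $\kappa$-bad set containing $v$, either directly or by contradiction. Let $C_1,\dots,C_k$ be the connected components of the subgraph induced by $G\setminus X$, and let $C$ be one of largest volume. Every edge leaving a $C_i$ lands in $X$, so $e(C_i,G\setminus C_i)=e(C_i,X)$ and $\sum_i e(C_i,X)=e(X,G\setminus X)<\kappa^2\vol(X)$. Since $G$ is connected, each $C_i$ is attached to $X$.

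Set $Y=G\setminus C = X\cup\bigcup_{C_i\neq C}C_i$. Then $Y$ is connected, because $X$ is connected and every other component is attached to $X$. Its complement $C$ is connected by construction. Moreover $e(Y,C)=e(X,C)<\kappa^2\vol(X)\le\kappa^2\vol(Y)\le \kappa\vol(Y)$, using $\kappa<1$, which follows from the hypotheses. Hence, if $\vol(Y)\le\vol(G)/2$, the set $Y$ is a strong $\kappa$-bad set containing $v$, and $v\in\mathbf{Isol}^+_\kappa(G)$ as desired.

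It remains to rule out $\vol(Y)>\vol(G)/2$, and I expect this step to need the volume hypothesis. In that case $\vol(C)<\vol(G)/2$, so every component $C_i$ has volume less than $\vol(G)/2$. Split the components into two classes.
\begin{itemize}
\item The \emph{bad} ones satisfy $e(C_i,X)<\kappa\vol(C_i)$. Each of these is connected, has connected complement ($X$ together with the other components), and has volume at most $\vol(G)/2$. So each is a strong $\kappa$-bad set, and their total volume is at most $\vol(\mathbf{Isol}^+_\kappa(G))\le\eps\vol(G)$.
\item The \emph{good} ones satisfy $e(C_i,X)\ge\kappa\vol(C_i)$. Their total volume is at most $\kappa^{-1}\sum_i e(C_i,X)<\kappa\vol(X)\le \kappa\vol(G)/2$.
\end{itemize}
Hence $\vol(G\setminus X)<\eps\vol(G)+\kappa\vol(G)/2$. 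On the other hand, $\vol(G\setminus X)\ge\vol(G)/2$ because $\vol(X)\le\vol(G)/2$. These two bounds are contradictory as soon as $\eps+\kappa/2\le 1/2$, i.e.\ $\kappa\le 1-2\eps$, which is implied by $\kappa<\min\left(\frac{1-2\eps}{3},1-4\eps\right)$.

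This proves $\mathbf{Isol}_{\kappa^2}(G)\subset\mathbf{Isol}^+_\kappa(G)$, and the volume bound on $\mathbf{Isol}_{\kappa^2}(G)$ follows immediately. The only delicate points are checking that the complements of $Y$ and of each $C_i$ are connected, which is where connectivity of $G$ and of $X$ enters, and getting the volume bookkeeping right. The stronger constraints in the hypothesis leave slack, which could absorb conventions such as degrees counting oriented edges.
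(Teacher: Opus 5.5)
Your proof is correct. It uses the same construction as the paper: $Y$ is $X$ together with every component of $G\setminus X$ except a largest one, and the components are split according to whether their boundary ratio is below $\kappa$. The only difference is how you rule out $\vol(Y)>\vol(G)/2$.

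\textbf{The paper's route.} It first bounds $\vol(Y)\le\bigl(\frac{1+\kappa}{2}+\eps\bigr)\vol(G)$ using only $C_2,\dots,C_r$, which automatically have volume at most $\vol(G)/2$. In the contradiction case it then derives $\vol(C_1)\ge\bigl(\frac12-\frac\kappa2-\eps\bigr)\vol(G)$. It uses this lower bound to show that $C_1$ is itself a strong $\kappa$-bad set, which is where $\kappa<\frac{1-2\eps}{3}$ enters. Finally it contradicts $\vol(C_1)\le\eps\vol(G)$, which is where $\kappa<1-4\eps$ enters.

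\textbf{Your route.} You note that in the contradiction case the largest component also has volume below $\vol(G)/2$. So it can go through the same good/bad dichotomy as all the other components. Summing over every component gives $\vol(G\setminus X)<(\eps+\kappa/2)\vol(G)$, which directly contradicts $\vol(G\setminus X)\ge\vol(G)/2$.

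\textbf{What this buys.} Your argument avoids the separate expansion estimate for $C_1$. It also proves the lemma under the weaker hypothesis $\kappa\le 1-2\eps$, which with $\eps>0$ also gives the $\kappa<1$ you use for $\kappa^2\le\kappa$. The paper's two constraints on $\kappa$ are therefore an artifact of treating $C_1$ separately. This makes no difference in the application, since $\kappa_0$ is shrunk there anyway, but your version is shorter and cleaner.
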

\begin{proof}
Let $v \in G$ be a $\kappa^2$-isolated vertex and let  $X$ be an arbitrary $\kappa^2$-bad set it belongs to. We write $C_1,\dots,C_r$ for the connected components of $G \setminus X$. We assume that $\vol_{ {G}}(C_1) \ge \cdots \ge \vol_{ {G}}(C_r)$. Thus, we have
\begin{align*}
e_{ {G}}(X,G \setminus X) = \somme{i=1}{r}{e_{ {G}}(C_i,G \setminus C_i) }.
\end{align*}

Let us write $Y = X \cup C_2 \cup \dots \cup C_{r}$. Thus, we have $v \in Y$, and to establish the lemma, we want to show that $Y$ is a \strongbadset. It is easily verified that  $\frac{e_{ {G}}(Y,G \setminus Y) }{\vol_{ {G}}(Y)} \le \frac{e_{ {G}}(X,G \setminus X) }{\vol_{ {G}}(X)} \le \kappa^2<\kappa$ and $G \setminus Y = C_1$ is connected (see Figure~\ref{prooflemma6}).

\begin{figure}[h]
    \centering
    \includegraphics[scale=0.2]{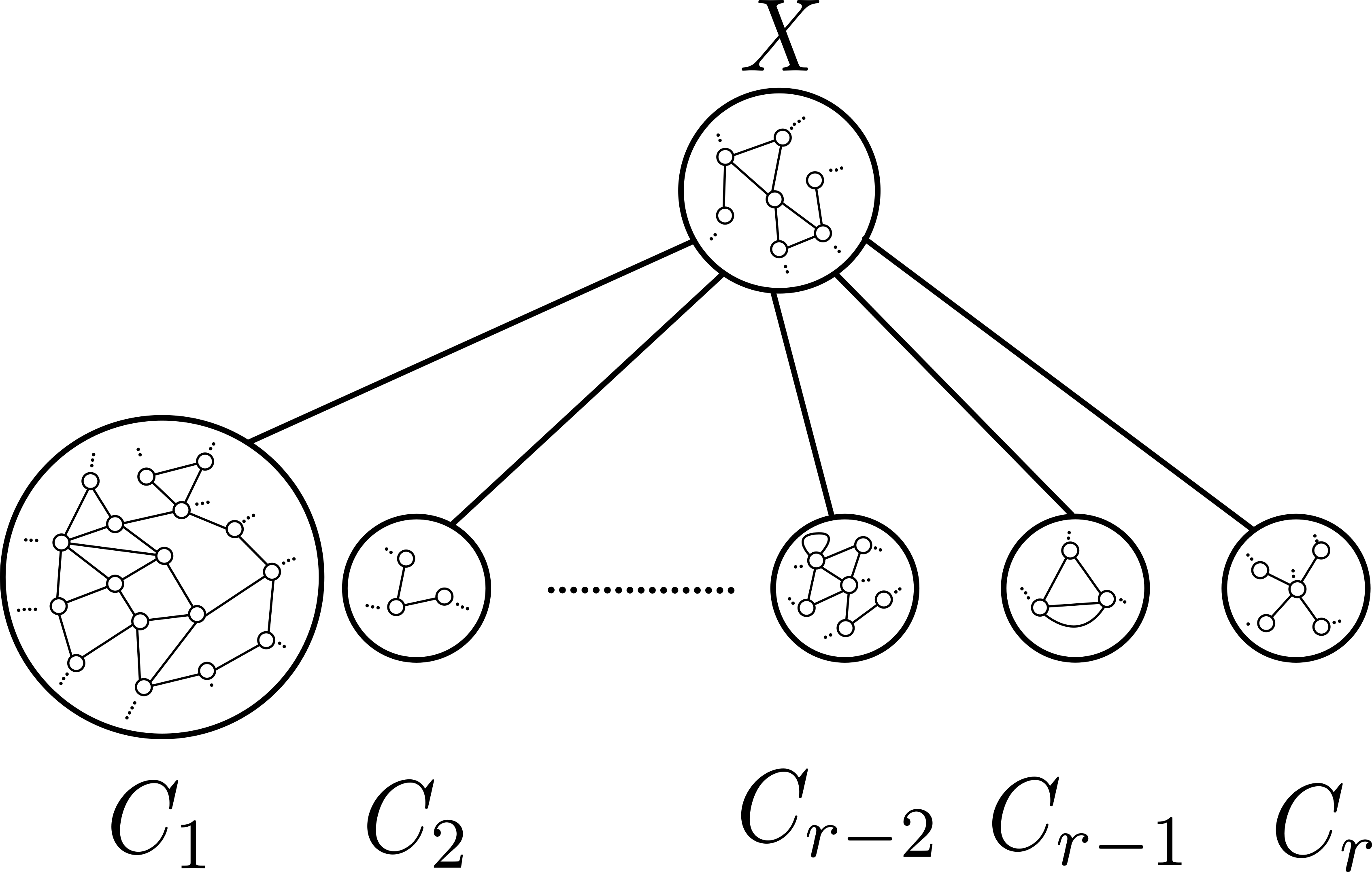}
    \caption{On this example, we represent on top the subset $X \subset G$ and on the bottom we represent the connected components $C_1,\cdots,C_r$ of $G \setminus X$.}
    \label{prooflemma6}
\end{figure}

Let us now verify that $\vol_{ {G}}(Y) \le \frac{\vol(G)}{2}$. We have 
\begin{align*}
\vol_{ {G}}(Y) = \vol_{ {G}}(X) + \somme{i=2}{r}{\vol_{ {G}}(C_i)}.
\end{align*}
Then, for any $2 \le i \le r$, using the fact that we have $\vol_{ {G}}(C_i) \le \frac{\vol(G)}{2}$ (since $C_1$ is the largest component) either $\frac{e_{ {G}}(C_i,G \setminus C_i) }{\vol_{ {G}}(C_i)} \ge \kappa$ or $C_i \subset \mathbf{Isol}^{+}_{\kappa}(G)$. Thus, we deduce that the right-hand side of the previous equality is bounded by 
\begin{align*}
&\vol_{ {G}}(X) + \somme{i=2}{r}{\vol_{ {G}}(C_i)\indi{\frac{e_{ {G}}(C_i,G \setminus C_i) }{\vol_{ {G}}(C_i)} \ge \kappa}} + \somme{i=2}{r}{\vol_{ {G}}(C_i) \indi{C_i \subset \mathbf{Isol}^{+}_{\kappa}(G) } } \\&\le \vol_{ {G}}(X)+\kappa^{-1}e_{ {G}}(X,G \setminus X) + \vol_{ {G}}(\mathbf{Isol}^{+}_{\kappa}(G)) \\
&\le (1+\kappa )\vol_{ {G}}(X)+\varepsilon \vol(G)\\
&\le \bigg(\frac{1+\kappa}{2}+\varepsilon\bigg)\vol(G).
\end{align*}
This rewrites 
\begin{align*}
\vol_{ {G}}(Y) \le \bigg(\frac{1+\kappa}{2}+\varepsilon\bigg)\vol(G).
\end{align*}
Let us reason by contradiction and assume that $\vol_{ {G}}(Y) > \frac{\vol(G)}{2}$. Then we have 
\begin{align*}
\frac{\vol(G)}{2} \ge \vol_{ {G}}(C_1) = \vol(G)-\vol_{ {G}}(Y) \ge \bigg(\frac{1}{2}-\frac{\kappa}{2}-\varepsilon\bigg)\vol(G).
\end{align*}
Using this and the fact that $\kappa < \frac{1-2\varepsilon}{3}$, we obtain 
\begin{align*}
\displaystyle \frac{e_{ {G}}(C_1,G \setminus C_1)}{\vol_{ {G}}(C_1)} \le \bigg(\frac{1}{2}-\frac{\kappa}{2}-\varepsilon\bigg)^{-1}\frac{e_{ {G}}(X,G \setminus X)}{\vol_{ {G}}(X)} \le \frac{\kappa^2}{\frac{1}{2}-\frac{\kappa}{2}-\varepsilon} <\kappa.
\end{align*} 
We deduce that $C_1 \subset \mathbf{Isol}^{+}_{\kappa}(G)$. Thus
\[\varepsilon \vol(G)\ge \vol_{ {G}}( \mathbf{Isol}^{+}_{\kappa}(G)) \ge \vol_{ {G}}(C_1) \ge \left(\frac{1}{2}-\frac{\kappa}{2}-\varepsilon\right)\vol(G).\] 
But it is easily checked that this contradicts the assumption that $\kappa<1-4\eps$. Therefore, by contradiction,  $\vol_{ {G}}(Y) \le \frac{\vol(G)}{2}$ and $Y$ is a \strongbadset. Since $v \in Y$, $v$ is strongly $\kappa$-isolated. This concludes the proof.
\end{proof}

\subsection{Maps and triangulations}

A \emph{map} $m$ is a finite graph (possibly containing loops and multiple edges) embedded in a compact connected oriented surface, considered up to homeomorphism. The connected components of the complement of the graph on the surface are called the \emph{faces} of $m$. Equivalently, a map can be viewed as a connected oriented surface obtained by identifying the sides of a finite collection of polygons, where each polygon corresponds to a face.

The \emph{genus} of a map is the genus of its underlying surface. The maps considered here are always \emph{rooted}, meaning they are equipped with a distinguished oriented edge called the \emph{root edge}. The vertex at the origin of the root edge is the \emph{root vertex}, and the face to its right is the \emph{root face}.

The \emph{degree} of a face is the number of incident edge-sides. Note that if both sides of an edge are incident to the same face, the edge contributes twice to the degree. A \emph{triangulation} is a rooted map in which all faces have degree $3$. For $n \geq 1$ and $g \geq 0$, let $\mathcal{T}(2n,g)$ denote the set of triangulations of genus $g$ with $2n$ faces (an even number of faces is required for edges to be glued in pairs).

According to Euler's formula, a triangulation in $\mathcal{T}(2n,g)$ has $3n$ edges and $n+2-2g$ vertices. In particular, the set $\mathcal{T}(2n,g)$ is non-empty if and only if $n \geq 2g-1$. We denote by $\mathbf{T}_{2n,g}$ a uniform random variable on $\mathcal{T}(2n,g)$.
\section{Proof of the main result: \cref{thm_triangulations}}

In this section, we prove our main theorem, using our structural results~\cref{thm_isolated_to_expander,thm_dual} whose proofs are postponed to the next sections.

Fix $\varepsilon > 0$. In \cite[Proposition $12$]{BudLoufChapuy} the authors prove that, most likely, $\bT$ contains few \strongisol{} vertices, that is there exists $\kappa_0 > 0$ such that
\begin{align}\label{lower_bound_prob_isol}
    \P\left(\#\mathbf{Isol}^{+}_{\kappa_0}(\mathbf{T}_{2n,g_n}^{*}) \ge \eps n\right) \le\eps.
\end{align}
Now, let us work under the event $\#\mathbf{Isol}^{+}_{\kappa_0}(\mathbf{T}_{2n,g_n}^{*}) \le \eps n$. The rest of the reasoning is entirely deterministic. We recall that $\mathbf{T}_{2n,g_n}^{*}$ is a $3$-regular graph with $2n$ vertices and $3n$ edges. Thus its total volume is 
$\vol(\mathbf{T}_{2n,g_n}^{*}) = 6n$ and

\[\vol\left(\mathbf{Isol}^{+}_{\kappa_0}(\mathbf{T}_{2n,g_n}^{*})\right) = 3\#\mathbf{Isol}^{+}_{\kappa_0}(\mathbf{T}_{2n,g_n}^{*}).\] We deduce that we have 
\[\vol\left( \mathbf{Isol}^{+}_{\kappa_0}(\mathbf{T}_{2n,g_n}^{*})\right) \le 3\varepsilon n .\] Up to reducing $\kappa_0$, we may assume that $\kappa_0 \le \min\left(\frac{1-2\varepsilon}{3},1-4\eps\right)$. Using Lemma~\ref{strong_isolated_to_isolated}, we deduce 
\begin{align}\label{bound_isolated_vertex}
    \vol\left(\mathbf{Isol}_{\kappa_0^2}(\mathbf{T}_{2n,g_n}^{*})\right) \le  3\varepsilon n.
\end{align} 

Then, using Theorem~\ref{thm_isolated_to_expander}, we deduce that there exists $G^{*} \subset \mathbf{T}_{2n,g_n}^{*}$ with at least $(1-\varepsilon)3n$ edges such that $G^{*}$ is a $\keps$-expander with $\kappa_{\eps} = (1-\varepsilon)\kappa_0^2$. Moreover, $\mathbf{T}_{2n,g_n}$ is a map with $3n$ edges and faces of degree $3$. Using Theorem~\ref{thm_dual} with $D=3$, we deduce that $\mathbf{T}_{2n,g_n}$ contains a subgraph $G$ on at least $(1-\varepsilon)3n$ edges that is a $\frac{1}{24}\kappa_{\varepsilon}$-expander. This concludes the proof by taking $\kappa = \frac{1}{24}\kappa_{\varepsilon}$.
\section{From isolated vertices to expander subgraphs: proof of \cref{thm_isolated_to_expander}}
Recall that we start with a graph $G$ on $n$ edges whose \isol{} vertices have total volume at most $\eps n$  and recall that $\keps=\kepsvalue<\kappa$. Let us define a finite process to "remove bad vertices from $G$".
We start with $G_0=G$. At each step $i\geq 1$, either $G_{i-1}$ is a \kexpbis, or it is empty, and we stop the process, or there exists $S_{i}\subset V(G_{i-1})$ that is a \badsetbis{} of $G_{i-1}$. Then we set $G_i=G_{i-1}\setminus S_i$. 

This is a finite process, let us call $\tau$ its final step. The final graph $G_\tau$ is an induced subgraph of $G$ that is either empty or a \kexp{}. To establish~\cref{thm_isolated_to_expander}, we wish to show that it actually has at least $(1-\eps)n$ edges.

By definition of the process, $G_\tau$ has at least $n-\sum_i \vol_{ {G}}(S_i)$ edges.
Therefore, since we made the assumption that $\vol_{ {G}}(\mathbf{Isol}_{\kappa}(G))\leq \eps n$,~\cref{thm_isolated_to_expander} will be an immediate consequence of the following:
\begin{prop}\label{prop_process}
   The vertices of the $S_i$ are all \isolbis{}, that is $\bigcup_{1\leq i\leq\tau}S_i\subset \mathbf{Isol}_{\keps}(G)\subset \mathbf{Isol}_{\kappa}(G)$.
\end{prop}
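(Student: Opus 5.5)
The plan is to track the cumulative removed set $U_i=S_1\cup\dots\cup S_i$ and prove, by induction on $i$, the following statement $(\mathcal H_i)$: \emph{every connected component $C$ of $U_i$ (connected in $G$) satisfies $e_G(C,G\setminus C)<\keps\vol_G(C)$ and $\vol_G(C)\le \vol(G)/2$.} Each such $C$ is then a \badsetbis{} of $G$. Every vertex of $S_j$, $j\le i$, lies in such a component, so $(\mathcal H_\tau)$ gives $\bigcup_i S_i\subset\mathbf{Isol}_{\keps}(G)$. The inclusion $\mathbf{Isol}_{\keps}(G)\subset\mathbf{Isol}_\kappa(G)$ is immediate because $\keps<\kappa$. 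Throughout I use the standing hypothesis $\vol_G(\mathbf{Isol}_\kappa(G))\le\eps n$ from \cref{thm_isolated_to_expander}.

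\textbf{Boundary estimate.} Suppose $(\mathcal H_{i-1})$ holds. The set $S_i$ is connected in $G_{i-1}$, hence in $G$, so the component of $U_i$ containing $S_i$ is $C=S_i\cup C_1\cup\dots\cup C_k$. Here $C_1,\dots,C_k$ are the components of $U_{i-1}$ adjacent to $S_i$; all other components of $U_i$ are unchanged. Split the edges leaving $C$ according to their starting point.
\begin{itemize}
\item An edge leaving $C$ from $S_i$ ends in $G\setminus U_i\subset G_{i-1}\setminus S_i$, so it is counted in $e_{G_{i-1}}(S_i,G_{i-1}\setminus S_i)<\keps\vol_{G_{i-1}}(S_i)$.
\item An edge leaving $C$ from some $C_j$ is counted in $e_G(C_j,G\setminus C_j)<\keps\vol_G(C_j)$.
\end{itemize}
Summing, and using $\vol_{G_{i-1}}(S_i)\le\vol_G(S_i)$, gives $e_G(C,G\setminus C)<\keps\vol_G(C)$. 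This step is routine.

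\textbf{Volume bound (main obstacle).} It remains to show $\vol_G(C)\le\vol(G)/2=n$. Each $C_j$ is a \badsetbis{}, hence consists of $\kappa$-isolated vertices, so $\sum_j\vol_G(C_j)\le\eps n$. Only $S_i$ is controlled in $G_{i-1}$, via $\vol_{G_{i-1}}(S_i)\le\vol(G_{i-1})/2$. Passing to $G$ adds $e_G(S_i,U_{i-1})$, which is at most $\sum_j e_G(C_j,G\setminus C_j)\le\keps\eps n$. Hence $\vol_G(C)\le n+O(\eps n)$, and the direct bound can overshoot $n$ by $O(\eps n)$.

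\textbf{Handling the overshoot by contradiction.} Suppose $\vol_G(C)>n$. Set $D=G\setminus C$, so $\vol_G(D)<n$ and also $\vol_G(D)\ge n-O(\eps n)$. The edges leaving $D$ are exactly those leaving $C$, so
\[
\frac{e_G(D,G\setminus D)}{\vol_G(D)}<\frac{\keps\,\vol_G(C)}{\vol_G(D)}\le\frac{(1-\eps)\kappa\,(1+O(\eps))}{1-O(\eps)}.
\]
The factor $(1-\eps)$ in $\keps$ is what should absorb the loss; the exact constants are what I expect to need care. The aim is that this ratio is $<\kappa$.

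Next, average over the connected components of $D$: their boundaries sum to that of $D$ and their volumes sum to $\vol_G(D)$. So components of $D$ covering a constant fraction of $\vol_G(D)$ are $\kappa$-bad sets. This forces $\vol_G(\mathbf{Isol}_\kappa(G))$ to be of order $n$, contradicting $\vol_G(\mathbf{Isol}_\kappa(G))\le\eps n<n/2$ since $\eps<1/2$.

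If the constants in this averaging do not close, my fallback is the trick of \cref{strong_isolated_to_isolated}. There, a large component is reabsorbed into the bad set, and the assumption on the isolated volume is used to show the complementary piece cannot also be bad.
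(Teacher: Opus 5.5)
Your strengthened hypothesis $(\mathcal H_i)$ and the boundary estimate are correct and coincide with the paper's argument: your $C$ is its $S_{\mathcal J}$. The problems are all in the volume bound.

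\textbf{The ratio estimate for $D$ does not close as written.} Replacing $e_G(C,G\setminus C)$ by $\kappa_\eps\vol_G(C)$ and dividing by $\vol_G(D)=2n-\vol_G(C)$ discards too much. Even the sharpest form of your volume estimate, $\vol_G(C)<n+\frac{1+\kappa_\eps}{2}\eps n$, gives $\kappa_\eps\vol_G(C)/\vol_G(D)\le\kappa$ only when $\kappa_\eps\le\eps/(2-\eps)$. So the factor $1-\eps$ cannot absorb the loss. You need the paper's finer bookkeeping, which compares $S_i$ with the remaining graph inside $G_{i-1}$:
\[
e_G(S_i,G_i)<\kappa_\eps\vol_{G_{i-1}}(S_i)\le\kappa_\eps\vol_{G_{i-1}}(G_i)\le\kappa_\eps\vol_G(G_i).
\]
Add at most $\kappa_\eps\eps n$ for the old pieces and use $\vol_G(G_i)>(1-\eps)n$. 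This gives a boundary ratio $<\kappa$ for $G_i$, and also for $D$, but with almost no room to spare.

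\textbf{The averaging step is the genuine gap.} If a union of components has boundary ratio $\kappa(1-\delta)$, averaging only guarantees that the components of ratio $<\kappa$ carry at least a $\delta$ fraction of the volume. Here the certified ratio is $\kappa_\eps(1+\eps n/\vol_G(D))$. Since $\vol_G(D)<n$ in the contradiction case, this exceeds $\kappa(1-\eps^2)$, so $\delta<\eps^2$. Moreover $\delta\to0$ as $\vol_G(D)$ approaches $(1-\eps)n$. So averaging certifies $\kappa$-isolated volume of order at most $\eps^2 n$, which does not contradict the hypothesis that it is $<\eps n$. Your claim that a constant fraction of $\vol_G(D)$ is bad, hence order $n$, is unsupported. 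The fallback via \cref{strong_isolated_to_isolated} does not help either, because absorbing more components into $C$ only increases $\vol_G(C)$.

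The paper never averages. It declares $G_t$ itself a $\kappa$-bad set, so all of its volume, more than $(1-\eps)n>\eps n$, would be isolated. That step tacitly treats $G_t$ as connected.

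\textbf{How to close the argument for a disconnected remainder.} You must also count the volume $u=\vol_G(U_{i-1})$ that is already known to be isolated. Let $V_{\mathcal B}$ be the total volume of the components of $G_i$ with ratio $<\kappa$. These are $\kappa$-bad sets, since $\vol_G(G_i)<n$ in the contradiction case. Now combine three bounds:
\begin{itemize}
\item $e_G(G_i,G\setminus G_i)<\kappa_\eps\vol_G(G_i)+(1-\kappa_\eps)e_G(G_i,U_{i-1})$;
\item $e_G(U_{i-1},G\setminus U_{i-1})<\kappa_\eps u$, which follows from $(\mathcal H_{i-1})$;
\item $\vol_G(G_i)>n-\frac{1+\kappa_\eps}{2}u$.
\end{itemize}
Together they give $\eps\vol_G(G_i)<V_{\mathcal B}+(1-\kappa_\eps)(1-\eps)u$, and hence
\[
\vol_G(\mathbf{Isol}_\kappa(G))\ge u+V_{\mathcal B}>\eps n+\bigl(\tfrac{\eps}{2}+\kappa_\eps(1-\tfrac{3\eps}{2})\bigr)u\ge\eps n .
\]
This is the contradiction you need, and it holds without assuming $G_i$ is connected.
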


\begin{proof}
    
Let us reason by induction.
For $t=1$, the proof is pretty clear, since $S_1$ itself is a \badsetbis{} of $G$.

Now let us turn to  $t>1$.  For $1\leq i,j\leq t $, set $A_{i,j}=e_{ {G}}(S_i,S_j)$ and $B_i=e_{ {G}}(S_i,G_t)$.  Let $\mathcal J$ be the "connected component of $S_t$", i.e. the set of indices $i\leq t$ such that $S_i$ is connected to $S_t$ in $G\setminus G_t$ (see Figure~\ref{proof_section4}). To shorten notation, we also write
$S_\J=\bigcup_{i\in \J}S_i$, as well as $B_\J=e_{ {G}}(S_\J,G_t)$. Recall that all $S_i$'s are disjoint. We want to show that $S_\J$ is a \badsetbis{}.

\begin{figure}[h]
    \centering
    \includegraphics[scale =0.2]{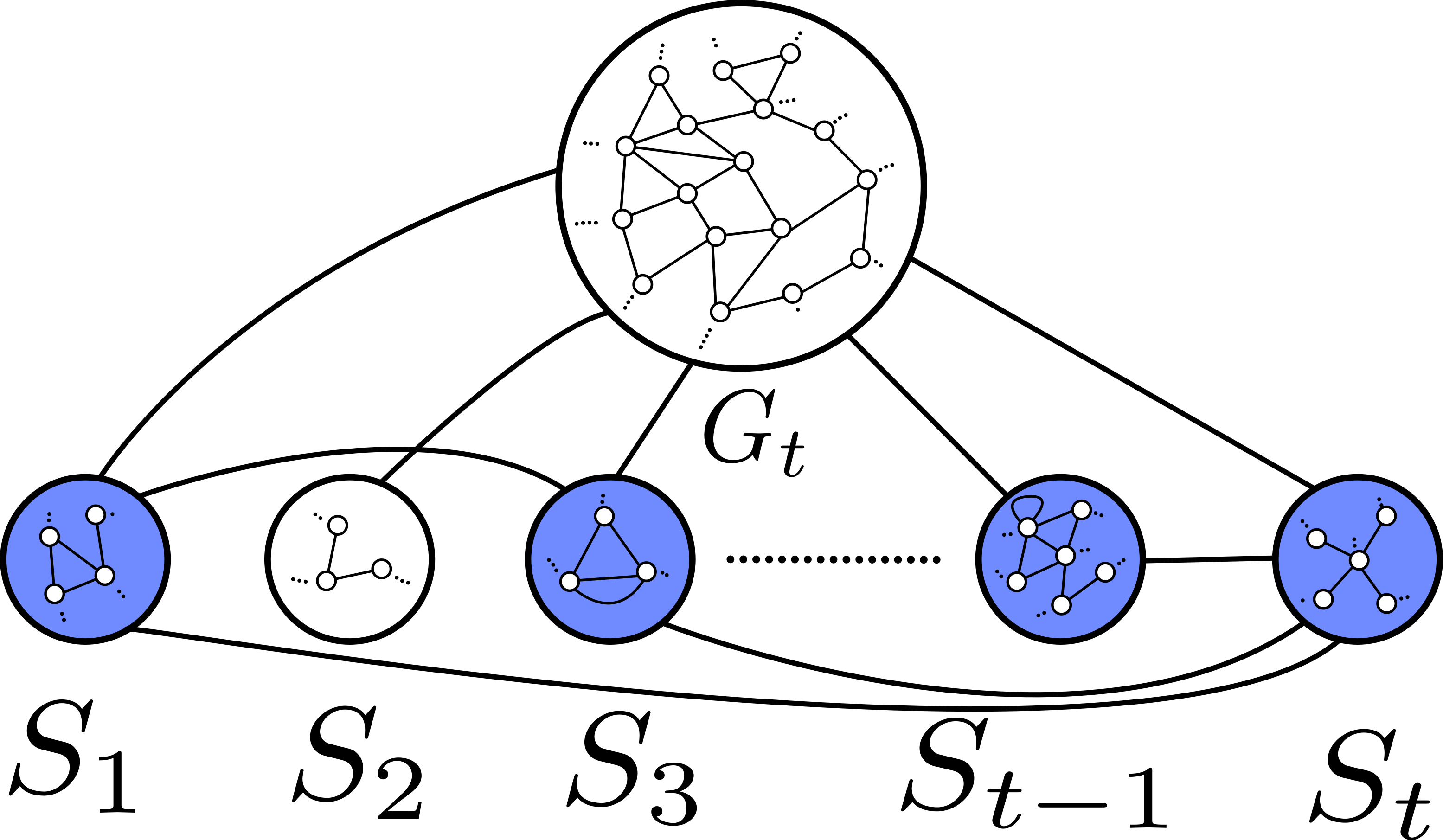}
    \caption{On the top: the subgraph $G_t \subset G$. On the bottom: the connected components $S_1,\cdots,S_t$ that have been removed by the process. The subset $S_{\mathcal{J}}$ is the blue part.}
    \label{proof_section4}
\end{figure}

By definition of the process, for every $1 \le i\leq t$, we have
    \begin{equation}\label{ineq_Si}
       \keps \vol_{ {G}}(S_i)> \keps \vol_{ {G_{i-1}}}(S_i)> \sum_{j>i} A_{i,j}+B_i\geq B_i
    \end{equation}

Now we can sum these inequalities for $i\in\J$ to obtain
\begin{equation}
   \keps\vol_{ {G}}(S_\J)>B_\J=e_{ {G}}(S_\J,G_t)=e_{ {G}}(S_\J,G\setminus S_\J) 
\end{equation}
where the last equality holds by definition of $\J$.

It remains to show that $\vol_{ {G}}(S_\J)<\frac{\vol(G)}{2}$.
First, by induction, we know that $\bigcup_{i<t}S_i\subset\mathbf{Isol}_{\keps}(G)\subset\mathbf{Isol}_{\kappa}(G)$, hence
\[\sum_{i<t}\vol_{ {G}}(S_i)<\eps n.\]
This entails by~\eqref{ineq_Si} that
\begin{equation}\label{uuu}
    \sum_{i<t}B_i<\keps\eps n.
\end{equation}
What's more, since $\vol_{ {G}}(S_t)<\vol_{ {G}}(G_t)$ and $2n=\vol_{ {G}}(S_t)+\vol_{ {G}}(G_t)+\sum_{i<t}\vol_{ {G}}(S_i)$, we get
\begin{equation}\label{vvv}
    \vol_{ {G}}(G_t)>(1-\eps)n>\eps n.
\end{equation}
On the other hand, using \cref{uuu,vvv} we get
\begin{align*}
    e_{ {G}}(G_t,G\setminus G_t)&=\sum_{i\leq t}B_i\\
    &\leq \keps\eps n+\keps \vol_{ {G}}(S_t)\\
    &\leq \keps \frac{\eps }{1-\eps}\vol_{ {G}}(G_t)+\keps\vol_{ {G}}(G_t)\\
    &=\kappa \vol_{ {G}}(G_t).
\end{align*}
and hence $\vol_{ {G}}(G_t)>\frac{\vol(G)}{2}$ otherwise it would be a \badset{}, and in particular we would have $\vol_{G}(\mathbf{Isol}_{\kappa}(G))>\eps n$  which contradicts the assumption of the theorem. Therefore $\vol_{ {G}}(S_\J)\leq \frac{\vol(G)}{2}$, hence $S_\J$ is a \badsetbis{} and $S_t\subset \mathbf{Isol}_{\keps}(G)$. This concludes the proof.

\end{proof}

\section{Expander subgraphs and duality: proof of~\cref{thm_dual}}

In this section, we show that, under mild degree conditions, having a large expander subgraph is a property that is stable under map duality: we prove~\cref{thm_dual}.

Recall that we start with a map $M$ on $n$ edges with face degrees bounded by $D>0$, such that its dual map $M^*$ contains a subgraph $G^*$ on at least $(1-\eps)n$ edges that is a \kexp{}.

Let $G$ be the subgraph of $M$ generated by edges that are dual to edges of $G^*$.  It is clear that $G$ contains at least $(1-\eps)n$ edges, it remains to show that it is an expander.

\begin{rem}
	With this construction, $G$ is not necessarily an induced subgraph of $M$. However, it was shown in \cite{LS20} that having large expander subgraphs, induced or not, is equivalent (up to changing $\kappa$).
\end{rem}

\begin{figure}[h]
    \centering
    \includegraphics[width=0.8\linewidth]{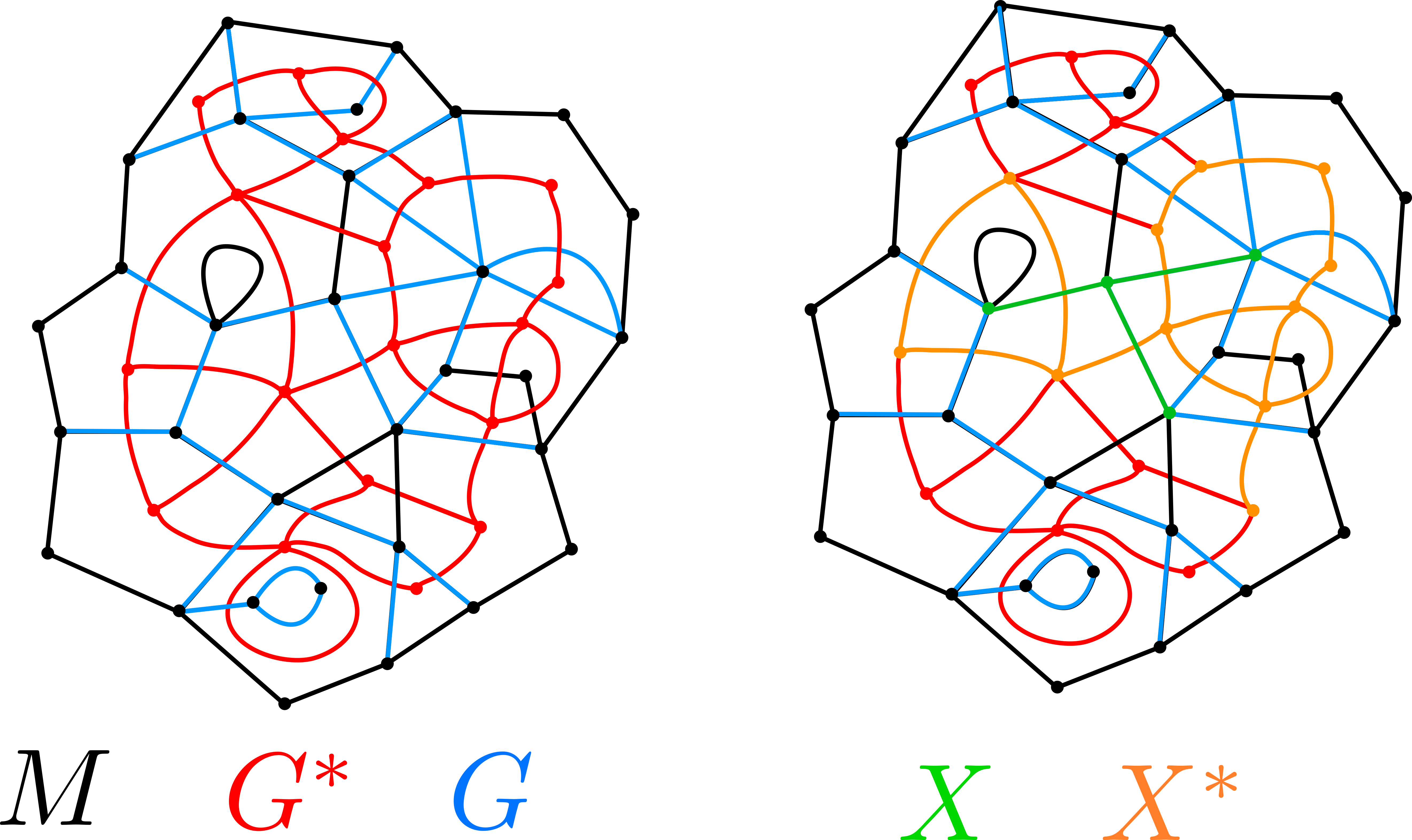}
    \caption{On the left: the map $M$ is represented in black. The subgraph $G^{*}$ of $M^{*}$ is represented in red. The subgraph $G$ of $M$ is represented in blue. On the right: the subgraph $X$ of $G$ is represented in green. The subgraph $X^{*}$ of $G^{*}$ is represented in orange.}
    \label{MGandGdual}
\end{figure}

\begin{prop}\label{prop_dual_expander}
   The graph $G$ is a $\frac{\kappa}{8D}$-expander.
\end{prop}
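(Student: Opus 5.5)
Take a connected $X \subset V(G)$ with $\vol_G(X) \le \vol(G)/2$. We aim to show $e_G(X, G\setminus X) \ge \frac{\kappa}{8D}\vol_G(X)$. The plan is to transfer $X$ to a set of faces $X^*$ of $G^*$, use the expansion of $G^*$ there, and transfer the boundary back, losing factors of $D$ at each transfer.

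\textbf{Construction of $X^*$.} Let $X^*$ be the set of vertices of $G^*$ (faces of $M$ incident to some edge of $G^*$) that are incident to at least one edge of $G$ with an endpoint in $X$. Each edge of $G$ with an endpoint in $X$ is dual to an edge of $G^*$ and borders at most two faces, both in $X^*$. Each face has degree at most $D$, hence degree at most $D$ in $G^*$. Therefore
\[
\vol_{G^*}(X^*) \ge \tfrac{1}{2}\#\{\text{edges of } G \text{ touching } X\} \ge \tfrac{1}{4}\vol_G(X)
\qquad\text{and}\qquad
\vol_{G^*}(X^*) \le D\,\vol_G(X),
\]
the last up to constants.

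\textbf{Boundary comparison.} This is the key geometric step. Take an edge $f^*$ of $G^*$ leaving $X^*$, from a face $F \in X^*$ to a face $F' \notin X^*$. Its dual edge $f \in G$ lies on $F$ and $F'$. Since $F' \notin X^*$, no edge of $G$ on $F'$ touches $X$, so $f$ has no endpoint in $X$. The face $F$, however, contains an edge of $G$ touching $X$. Walking around the boundary of $F$ (at most $D$ edges) from the $X$-touching edge to $f$, we must meet a vertex of $X$ followed by a vertex outside $X$. The trouble is that the edge between them may not belong to $G$. So the real statement to aim for is: every face $F$ of $X^*$ incident to a boundary edge of $G^*$ is incident to a vertex of $X$ that has an edge of $G$ leaving $X$, or else to a vertex outside $X$ with an edge of $G$ entering $X$. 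Each boundary edge of $G$ is then charged by at most $2D$ faces, and each face by at most $D$ boundary dual edges, giving $e_{G^*}(X^*, G^*\setminus X^*) \le 2D\,e_G(X, G\setminus X)$ or similar. If the face-boundary walk fails because of non-$G$ edges, the fallback is to use connectivity of $X$ in $G$ together with the expansion of $G^*$ to rule it out, or to redefine $X^*$ as the faces all of whose $G$-edges touch $X$. I expect this to be the main obstacle.

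\textbf{Volume condition and conclusion.} If $\vol_{G^*}(X^*) \le \vol(G^*)/2$, then expansion of $G^*$ gives
\[
e_G(X, G\setminus X) \ge \frac{\kappa}{2D}\vol_{G^*}(X^*) \ge \frac{\kappa}{8D}\vol_G(X).
\]
Otherwise, apply the same argument to the complement. Let $Y^*$ be the set of faces in $G^*$ not in $X^*$. Then $\vol_{G^*}(Y^*) \le \vol(G^*)/2$, and $Y^*$ has the same boundary. We need $\vol_{G^*}(Y^*)$ to be comparable to $\vol_G(X)$; this follows from $\vol_G(G\setminus X) \ge \vol_G(X)$ and the fact that edges of $G$ inside $G\setminus X$ correspond to faces in $Y^*$ or to faces near the boundary, the latter controlled by the boundary itself. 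Treat the two cases separately and take the worse constant, $\frac{\kappa}{8D}$. Connectivity of $X^*$ is not needed, since the Cheeger bound of $G^*$ holds for all sets: a disconnected bad set has a bad component.
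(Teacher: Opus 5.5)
Your overall strategy is the paper's: transfer $X$ to a set $X^*$ of faces, compare volumes and boundaries, then invoke expansion of $G^*$. The genuine gap is the boundary comparison, which you leave open. You state the target $e_{G^*}(X^*,G^*\setminus X^*)\le 2D\,e_G(X,G\setminus X)$ and list fallbacks, but prove nothing. Note also that charging a face to a boundary edge that merely sits at one of its vertices would not give bounded multiplicity, because vertex degrees in $M$ are unbounded; the boundary edge has to lie on the face itself.

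Your worry about non-$G$ edges is exactly right. The paper handles this step by asserting that every edge of $M$ around a face of $X^*$ lies in $G$ ``because $G^*$ is induced''. But inducedness only puts an edge of $M$ into $G$ when \emph{both} faces it borders are vertices of $G^*$, so that claim is unjustified. For $D\ge 4$ the step, and the proposition itself, fail:
\begin{itemize}
\item Let $M$ be the cube ($n=12$, $D=4$) and keep the top, bottom, front and back faces.
\item The induced subgraph $G^*$ of the octahedron is a $4$-cycle. It is a $\frac12$-expander with $4=(1-\frac23)\cdot 12$ edges.
\item $G$ is the perfect matching formed by the four edges that top or bottom shares with front or back.
\item Take $X$ to be the endpoints of the top--front edge. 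Under both your definition and the paper's, $X^*$ consists of the top and front faces, and two $G^*$-edges leave it. Yet $e_G(X,G\setminus X)=0$.
\end{itemize}
So $G$ is not a $\kappa'$-expander for any $\kappa'>0$, and neither connectivity of $X$ nor expansion of $G^*$ can rescue the step. Planting this local pattern in a large map with expanding dual shows that small $\eps$ does not help either. The pattern is an edge whose endpoints both have degree $3$, with the third face at each endpoint deleted.

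For $D=3$, the only case used in Theorem~\ref{thm_triangulations}, your definition of $X^*$ (faces carrying a $G$-edge with an endpoint in $X$) does close the gap. The paper's definition (faces incident to a vertex of $X$) fails even there, e.g.\ for a kept triangle $xyz$ with $x\in X$ whose only $G$-side is $yz$. The argument with your definition goes as follows. Any set of sides of a triangle is a cyclic interval, so the $G$-sides of a face form a walk in $G$. If $f\in X^*$ has a $G^*$-neighbour $f'\notin X^*$, their common edge has no endpoint in $X$. Walking along the interval from an $X$-touching $G$-edge of $f$ to that common edge produces an edge of $E_G(X,G\setminus X)$ on $f$ itself, which yields your inequality.

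Your complement case is a second gap: ``$\vol_{G^*}(Y^*)$ comparable to $\vol_G(X)$'' is asserted, not proved. The paper avoids it as follows.
\begin{itemize}
\item First discard the case $e_G(X,G\setminus X)\ge\frac{1}{4D}\vol_G(X)$; this is harmless since $\kappa\le 1$.
\item Otherwise, split $X^*$ into two parts. Faces all of whose $G$-edges have both endpoints in $X$ have total volume at most $\vol_G(X)$. Each remaining face carries a boundary edge, so there are at most $2e_G(X,G\setminus X)$ of them, with volume at most $2D\,e_G(X,G\setminus X)\le\frac12\vol_G(X)$.
\item Hence $\vol_{G^*}(X^*)\le\frac32\vol_G(X)\le\frac34\vol(G^*)$, so the complement of $X^*$ has volume at least $\frac14\vol_{G^*}(X^*)$.
\end{itemize}
Expansion of $G^*$ then costs only a factor $4$. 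Together with $\vol_G(X)\le\vol_{G^*}(X^*)$, this gives the constant $\frac{\kappa}{8D}$ with no separate complement argument.
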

Let us fix $X\subset V(G)$ with $\vol_G(X)\leq \vol_G(G\setminus X)$.
In order to prove Proposition~\ref{prop_dual_expander}, we want to show that
\begin{equation}\label{eq_dual_expansion}
    e_{ {G}}(X,G\setminus X)\geq \frac{\kappa}{8D}\vol_G(X).
\end{equation}
If 
\[ e_{ {G}}(X,G\setminus X)\geq \tk \vol_G(X)\]
then \cref{eq_dual_expansion} is satisfied since $\tk\geq \frac{\kappa}{8D}$ (by definition of expansion, we necessarily have $\kappa\leq 1$).
Assume now that
\begin{equation}\label{eq_kappa_tilde}
    e_{ {G}}(X,G\setminus X)\leq \tk \vol_G(X).
\end{equation}

Let $X^*$ be the set of faces of $G^*$ that are incident to at least one vertex of $X$ (see Figure~\ref{MGandGdual}). We will compare the expansion of $X$ to the expansion of $X^*$.

We start by comparing volumes.
\begin{lem}\label{lem_ineq_volumes}
We have the inequality
    \[\vol_G(X)\leq \vol_{G^*}(X^*).\]
\end{lem}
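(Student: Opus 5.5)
The plan is to compare the two volumes by an injection between oriented edges. Recall that $\vol_G(X)=\sum_{x\in X}\deg_G(x)$ counts the oriented edges of $G$ whose starting point lies in $X$. Similarly, $\vol_{G^*}(X^*)$ counts the oriented edges of $G^*$ starting at a face in $X^*$. So it suffices to build an injective map from the first set of oriented edges to the second.

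\textbf{The map.} Take an oriented edge $\vec e=(x\to y)$ of $G$ with $x\in X$. Its underlying edge $e$ is, by construction of $G$, dual to an edge $e^*$ of $G^*$. We orient $e^*$ so that it crosses $e$ from the face $f_\ell$ on the left of $\vec e$ to the face $f_r$ on its right, and map $\vec e\mapsto \vec e^{\,*}=(f_\ell\to f_r)$.
\begin{itemize}
\item Both $f_\ell$ and $f_r$ are faces of $M$ incident to the edge $e$, hence incident to its endpoint $x\in X$. So the starting point $f_\ell$ lies in $X^*$, as required. (Here I read "faces of $G^*$" as the vertices of $G^*$, which are faces of $M$.)
\item Injectivity follows because duality $\vec e\mapsto \vec e^{\,*}$ is a bijection between oriented edges of $M$ and oriented edges of $M^*$. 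The two orientations of $e$ go to the two opposite orientations of $e^*$.
\end{itemize}
Summing over $x\in X$ then gives $\vol_G(X)\le \vol_{G^*}(X^*)$.

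\textbf{Main obstacle: loops.} The degree convention $\deg_G(x)=e(\{x\},G\setminus\{x\})$ discards loops, so I must check that the image of a counted oriented edge is itself counted. The problematic case is an edge $e$ with the same face $f$ on both of its sides. Then $e^*$ is a loop at $f$ in $G^*$, and it contributes nothing to $\deg_{G^*}(f)$, while $e$ may be a genuine non-loop edge contributing to $\vol_G(X)$. Conversely, loops of $G$ are discarded on the left-hand side, which only helps.

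I would first try to settle this case by adopting the standard convention that a loop contributes $2$ to the degree, in both $G$ and $G^*$. Under that convention the oriented-edge bijection above is exactly degree-preserving, and the argument goes through verbatim. If the paper's literal convention must be kept, I would instead argue edge by edge. An edge $e$ of $G$ contributes at most $2$ to $\vol_G(X)$, while its dual $e^*$ contributes exactly $2$ to $\vol_{G^*}(X^*)$ whenever $e^*$ is not a loop, since both endpoints of $e^*$ lie in $X^*$. The only deficit then comes from duals of edges bordered twice by the same face. I would check whether such configurations can be excluded or absorbed, but I expect the intended reading to be the loop-counts-twice convention.
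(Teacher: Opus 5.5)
Your argument is correct and is exactly the paper's proof: the oriented-edge duality $\vec e\mapsto \vec e^{\,*}$ injects the oriented edges of $G$ starting in $X$ into the oriented edges of $G^*$ starting in $X^*$, since both endpoints of $e^*$ are faces incident to $e$ and hence to its endpoint in $X$. Your loop caveat is well founded: the paper tacitly lets a loop contribute $2$ to the degree (it uses $\vol(\mathbf{T}_{2n,g_n}^*)=6n$ for a $3$-regular dual, and $\vol(G)=\vol(G^*)$ in the next lemma). Under the literal loop-free definition the inequality can genuinely fail; for instance, $M$ a single edge on the sphere with $X$ one endpoint gives $1\le 0$. So your fallback cannot be completed, and the loop-counts-twice reading is the one to adopt.
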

\begin{proof}
Pick a convention for duality of oriented edges.
Let $\vec{e}$ be an oriented edge of $G$ whose starting point is in $X$ (notice that such oriented edges are counted by $\vol_G(X)$). Its dual $\vec{e}^*$ belongs to $G^*$ and has its starting point in $X^*$ by definition of $G^*$ and $X^*$. This defines an injective operation and the lemma follows.
\end{proof}

Then we compare the number of outgoing edges.
\begin{lem}\label{lem_ineq_outgoing}
We have the inequality
\begin{equation}
   e_{ {G}}(X,G\setminus X)\geq \frac 1 {2D}e_{ {G^{*}}}(X^*,G^*\setminus X^*).
\end{equation}    
\end{lem}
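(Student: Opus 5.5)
The plan is a charging argument: I would send each oriented edge of $G^*$ leaving $X^*$ to an edge of $G$ leaving $X$, and show that each edge of $G$ receives at most $2D$ charges.

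\textbf{Where the escaping edge sits.} Take an oriented edge $\vec{e}^{\,*}\in E_{G^*}(X^*,G^*\setminus X^*)$, going from a face $f\in X^*$ to a face $f'\notin X^*$. Its dual edge $e$ lies in $G$, by the definition of $G$, and is incident to both $f$ and $f'$. Since $f'\notin X^*$, no vertex of $X$ is incident to $f'$. In particular, neither endpoint of $e$ lies in $X$.

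\textbf{Finding a crossing edge.} Since $f\in X^*$, the boundary walk of $f$ contains some vertex $x\in X$. It also contains the endpoints of $e$, which are outside $X$. Follow the boundary walk of $f$ from $x$ to an endpoint of $e$. Along the way there must be a first boundary edge $c(\vec{e}^{\,*})$ with one endpoint in $X$ and the other outside $X$. I would orient it from its $X$-endpoint outward. If this edge belongs to $G$, then it is an element of $E_G(X,G\setminus X)$ lying on the boundary of $f$.

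\textbf{Counting.} Fix an oriented edge $\vec{c}\in E_G(X,G\setminus X)$. I claim it is the image of at most $2D$ edges $\vec{e}^{\,*}$.
\begin{itemize}
\item The face $f$ must be one of the at most two faces incident to $c$.
\item For that face, $\vec{e}^{\,*}$ is the dual of one of at most $D$ edges on the boundary of $f$, since face degrees are bounded by $D$. Fixing the direction from $f$ outward means each such edge yields at most one $\vec{e}^{\,*}$.
\end{itemize}
Summing over $\vec{c}$ gives $e_{G^*}(X^*,G^*\setminus X^*)\le 2D\, e_G(X,G\setminus X)$, which is the claim.

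\textbf{Main obstacle.} The step I expect to be delicate is ensuring that the crossing edge found on the boundary of $f$ actually lies in $G$ and not merely in $M$. If it does not, its dual is missing from $G^*$.

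To handle this, I would first try to choose the walk around $f$ using both directions, starting from $e$, which is in $G$, and moving towards $x$. If every boundary edge of $f$ between $e$ and $x$ avoided $G$, I would use that $x\in X\subset V(G)$ has positive degree in $G$ and look at the $G$-edges incident to $x$.

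Failing a clean fix, I would allow the charge to land on a $G$-edge incident to $f$ that leaves $X$, possibly located through the faces adjacent to $x$. I would also keep in reserve assumption \eqref{eq_kappa_tilde}, which says $X$ already has few outgoing edges, in case the degenerate configurations need a separate accounting.
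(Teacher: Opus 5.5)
\textbf{Gap: the crossing edge need not lie in $G$, and the lemma is false as stated.} Your charging scheme is the paper's. Each face of $X^*$ with a $G^*$-edge leaving $X^*$ is assigned a crossing edge on its boundary; each crossing edge borders at most two faces, and each face carries at most $D$ dual edges. The step you flag as the main obstacle is a genuine gap: nothing in your argument guarantees that the crossing edge found on $\partial f$ belongs to $G$. Your fallbacks do not supply it either (walking from $e$ towards $x$, using the $G$-edges at $x$, invoking \eqref{eq_kappa_tilde}).

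The paper closes this step by asserting that every edge of $M$ on the boundary of $f$ lies in $G$ ``because $G^*$ is an induced subgraph of $M^*$''. Inducedness only says that an edge of $M^*$ joining two vertices of $G^*$ belongs to $G^*$. The face on the other side of a boundary edge of $f$ need not be a vertex of $G^*$, and then that edge is not in $G$.

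The gap cannot be filled, because the inequality fails. Take the square-grid torus $\mathbb{Z}_4\times\mathbb{Z}_4$, where all faces have degree $D=4$. Let $G^*$ be the subgraph of $M^*$ induced by the three squares $Q_i=[i,i+1]\times[0,1]$, $i=0,1,2$. Then:
\begin{itemize}
\item $G^*$ is the path $Q_0Q_1Q_2$, and its Cheeger constant is $1$.
\item $G$ consists of the two disjoint edges $\{1\}\times[0,1]$ and $\{2\}\times[0,1]$.
\item For $X=\{(1,0),(1,1)\}$ we get $\vol_G(X)=2=\vol_G(G\setminus X)$ and $e_G(X,G\setminus X)=0$, so \eqref{eq_kappa_tilde} holds as well.
\item On the other side, $X^*=\{Q_0,Q_1\}$ and $e_{G^*}(X^*,G^*\setminus X^*)=1$.
\end{itemize}
So the left side of the lemma is $0$ while the right side is $\frac{1}{8}$. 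The only boundary edges of $Q_1$ joining $X$ to a vertex outside $X$ are its two horizontal sides. These border squares outside $G^*$, so they are not in $G$; this is exactly your obstacle.

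The defect is local and has nothing to do with $\eps$ being large. In a large grid torus, take a vertical edge $ab$ and delete from $V(G^*)$ the four squares directly above or below the two squares on either side of $ab$. Then $\{a,b\}$ is a connected component of $G$, and $G^*$ misses only $O(1)$ edges. For $X=\{a,b\}$ the left side is again $0$ while the right side is positive.

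In particular, this $G$ can be disconnected, so the claim that $G$ is a $\frac{\kappa}{8D}$-expander also fails. The lemma needs an extra hypothesis or a different construction of $G$, and no argument around $\partial f$ can prove it as stated.
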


\begin{proof}
    
Let $F\subset X^*$ be the faces that are adjacent to at least one face not in $X^*$ (see Figure~\ref{F}).

First, by definition of $F$, we have

\begin{equation}
    e_{ {G^{*}}}(X^*,G^*\setminus X^*)=e_{ {G^{*}}}(F,G^*\setminus X^*)\leq \vol_{G^*}(F).
\end{equation}

\begin{figure}[h]
    \centering
    \includegraphics[width=0.4\linewidth]{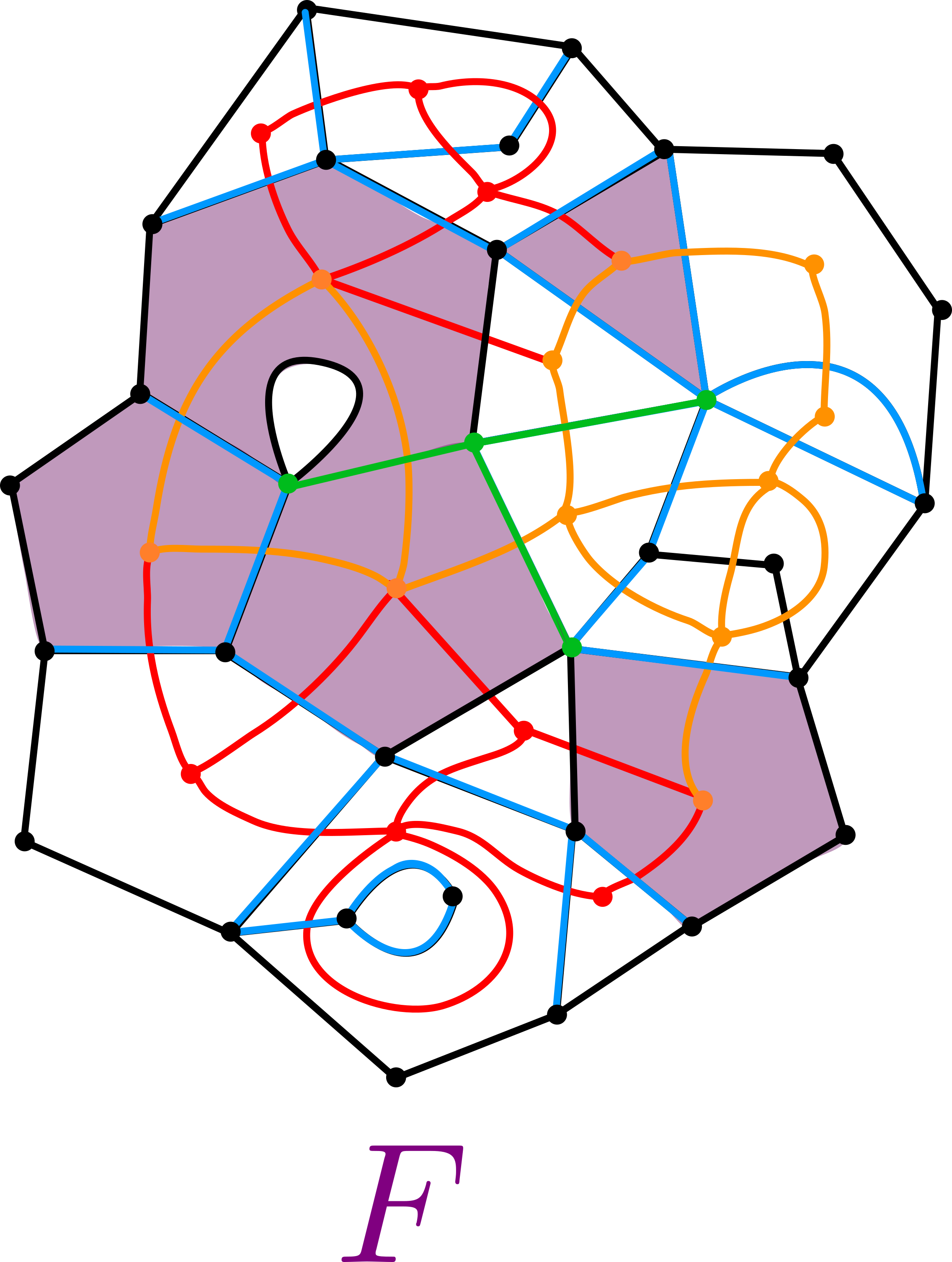}
    \caption{We represent in purple the set $F \subset X^{*}$ in purple. Note that it exactly corresponds to the set of vertices in $X^{*}$ that are incident to a red edge.}
    \label{F}
\end{figure}

On the other hand, by definition, every face $f\in F$ is incident to a vertex of $X$ and a vertex of $G\setminus X$. What's more, all its incident edges in $M$ are also in $G$ (this is due to the fact that $G^*$ is an induced subgraph of $M^*$). Therefore, $f$ is incident to at least one edge of  $E_{ {G}}(X,G\setminus X)$, and since every such edge touches two faces of $F$ at most, we get
\begin{equation}
    e_{ {G}}(X,G\setminus X)\geq \frac{\#F}{2}\geq \frac{\vol_{G^*}(F)}{2D}
\end{equation}
where in the second inequality we used the fact that all faces have degree bounded by $D$.

Concatenating the previous two inequalities, we obtain the result.

\end{proof}

Finally, we need to make sure that the volume of $X^*$ is not too large, in order to be able to use expansion inequalities later.

\begin{lem}\label{lem_vol_not_too_big}
    We have the inequality
    \[\vol_{G^*}(X^*)\leq \frac{3}{4}\vol(G^*).\]
\end{lem}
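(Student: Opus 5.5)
The plan is to split $X^*$ into two kinds of faces and bound the $G^*$-volume of each kind by a multiple of $\vol_G(X)$, using the standing assumption \eqref{eq_kappa_tilde}. Let $I\subset X^*$ be the faces of $X^*$ all of whose incident vertices lie in $X$. Let $F'=X^*\setminus I$ be the faces incident both to a vertex of $X$ and to a vertex outside $X$. As in the proof of Lemma~\ref{lem_ineq_outgoing}, since $G^*$ is an induced subgraph of $M^*$, every edge of $M$ bounding a face of $X^*$ is an edge of $G$. Hence all vertices incident to such a face are vertices of $G$, and every face in $F'$ is incident to at least one edge of $E_G(X,G\setminus X)$.

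\emph{Bounding the faces of $I$.} Each oriented edge of $G^*$ starting at a face $f\in I$ is dual to an edge of $G$ on the boundary of $f$. Both endpoints of that edge are incident to $f$, so both lie in $X$. Each edge of $G$ has two oriented duals. The number of edges of $G$ with both endpoints in $X$ is at most $\vol_G(X)/2$. Hence
\[\vol_{G^*}(I)\le 2\cdot\tfrac12\vol_G(X)=\vol_G(X).\]

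\emph{Bounding the faces of $F'$.} Each face has degree at most $D$, so $\vol_{G^*}(F')\le D\,\#F'$. Each edge of $E_G(X,G\setminus X)$ touches at most two faces, so $\#F'\le 2e_G(X,G\setminus X)$. Combined with \eqref{eq_kappa_tilde} this gives
\[\vol_{G^*}(F')\le 2D\, e_G(X,G\setminus X)\le \tfrac12 \vol_G(X).\]

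Adding the two bounds gives $\vol_{G^*}(X^*)\le \frac32\vol_G(X)$. The assumption $\vol_G(X)\le\vol_G(G\setminus X)$ gives $\vol_G(X)\le \frac12\vol(G)$. The edges of $G$ and $G^*$ are in bijection by duality, and each edge contributes the same amount to the total volume on both sides, so $\vol(G)=\vol(G^*)$. This yields $\vol_{G^*}(X^*)\le\frac34\vol(G^*)$.

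The main obstacle is bookkeeping rather than ideas. First, one must check that the degree convention $\deg(x)=e(\{x\},G\setminus\{x\})$ treats loops consistently on both sides of the duality. A loop in $G$ may dual to a non-loop and vice versa, so I would verify that $\vol(G)=\vol(G^*)$ still holds, or that the inequality only needs the direction $\vol_G(X)\le\frac12\vol(G^*)$. Second, in the count for $I$, one must make sure that an edge bounding the same face twice is not double-counted beyond the factor $2$ already allowed.
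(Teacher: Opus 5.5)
Your split of $X^*$ and both estimates are essentially the paper's (your $I$ and $F'$ play the roles of its $F_1$ and $F_2$), and the bound $\vol_{G^*}(I)\le\vol_G(X)$ is fine. The loop bookkeeping you worry about is a side issue. The real problem is a step you take for granted, which the paper's proof also relies on (it is asserted in the proof of Lemma~\ref{lem_ineq_outgoing} and reused here): that every edge of $M$ bounding a face of $X^*$ lies in $G$. Inducedness of $G^*$ only gives this for edges whose \emph{two} sides are both faces in $V(G^*)$. A side of $f\in X^*$ that borders a face outside $V(G^*)$ is not an edge of $G$. So a face $f\in V(G^*)$ can meet $X$ only at a single corner at some $x\in X$ whose two sides border faces outside $V(G^*)$. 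Such an $f$ lies in $F'$ and is incident to no edge of $E_G(X,G\setminus X)$, yet it still contributes up to $D$ to $\vol_{G^*}(X^*)$. Since vertex degrees in $M$ are unbounded, one vertex $x$ can carry many such faces, and $\#F'\le 2e_G(X,G\setminus X)$ breaks down.

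This cannot be fixed by bookkeeping, because the inequality itself fails under the stated hypotheses. Take the wheel $M$ with centre $x$, rim $v_1,\dots,v_{2m}$ ($m\ge 7$), triangles $T_i=xv_iv_{i+1}$ and outer face $O$. Let $V(G^*)=\{O,T_{2m}\}\cup\{T_i: i\text{ odd}\}$. Then $G^*$ is connected with $m+3$ edges, so all hypotheses of Theorem~\ref{thm_dual} hold with $n=4m$, $D=2m$, $\kappa=h(G^*)>0$ and $\eps=1-\frac{m+3}{4m}$. The graph $G$ is the component $X=\{x,v_{2m-1},v_{2m},v_1,v_2\}$ (five edges) plus the $m-2$ isolated edges $v_{2j+1}v_{2j+2}$. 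Hence $e_G(X,G\setminus X)=0$, so \eqref{eq_kappa_tilde} holds, and $\vol_G(X)=10\le 2(m-2)=\vol_G(G\setminus X)$. Yet every triangle of $V(G^*)$ is incident to $x$ and $O$ is incident to $v_1$, so $X^*=V(G^*)$ and $\vol_{G^*}(X^*)=\vol(G^*)$. A correct argument therefore needs a different definition of $X^*$ or extra hypotheses. For instance, let $X^*$ be the faces having an edge of $G$ at a vertex of $X$. When all faces are triangles, any two sides of a face share a vertex, so your two-part count goes through, which is enough for $D=3$. For larger face degrees the per-face charging fails again: take a quadrilateral with one side inside $X$, the opposite side disjoint from $X$, and the other two sides bordering faces outside $V(G^*)$.
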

\begin{proof}
    We write $X^*=F_1\sqcup F_2$, where $F_1$ are faces that are incident to edges of $E(X)$ only.
    It is direct that
    \[\vol_{G^*}(F_1)\leq 2e(X)\leq \vol_G(X).\]
    
    On the other hand, similarly as in the proof of~\cref{lem_ineq_outgoing}, every face of $F_2$ has to be incident to at least one edge between $X$ and $G\setminus X$, and in return each such edge is incident to at most two of these faces, therefore
    \[\#F_2\leq 2 e_{ {G}}(X,G\setminus X). \]
    Now, since face degrees are bounded by $D$:
    \[\vol_{G^*}(F_2)\leq 2D e_{ {G}}(X,G\setminus X)\leq \frac{\vol_{G}(X)}{2}\]
    where in the second inequality we used~\cref{eq_kappa_tilde}.
    Concatenating both inequalities we get
    \[\vol_{G^*}(X^*)\leq \frac{3}{2}\vol_G(X),\]
    and recall that $\vol_G(X)\leq \vol(G)/2$ and that $\vol(G)=\vol(G^*)$ by definition of $G$.
    
\end{proof}

We can now prove~\cref{eq_dual_expansion}
\begin{proof}
Since $G^*$ is a \kexp{}, we get the inequality
\[\frac{e_{ {G^{*}}}(X^*,G^*\setminus X^*)}{\min(\vol_{G^*}(X^*),\vol_{G^*}(G^*\setminus X^*))}\geq \kappa.\]
But by Lemma~\ref{lem_vol_not_too_big}, we have the inequality
\[\vol_{G^*}(G^*\setminus X^*)\geq \frac{\vol(G^*)}{4}\geq \frac{\vol_{G^*}(X^*)}{4}.\]
It follows that 
\[\min(\vol_{G^*}(X^*),\vol_{G^*}(G^*\setminus X^*))\geq  \frac{\vol_{G^*}(X^*)}{4}.\]
Therefore
\[\frac{e_{ {G^{*}}}(X^*,G^*\setminus X^*)}{\vol_{G^*}(X^*)}\geq\frac{\kappa}{4} .\]
On the other hand, by Lemmas~\ref{lem_ineq_volumes} and \ref{lem_ineq_outgoing}, we have
\[\frac{e_{ {G}}(X,G\setminus X)}{\vol_{G}(X)}\geq\frac 1 {2D}\frac{e_{ {G^{*}}}(X^*,G^*\setminus X^*)}{\vol_{G^*}(X^*)}.\]
This finishes our proof.

\end{proof}

\medskip

\bibliographystyle{abbrv}
\bibliography{bibli}

\end{document}